\documentclass[12pt,a4paper]{amsart}
\usepackage{dsfont}
\usepackage{graphicx}
\usepackage{amssymb}
\usepackage[all]{xy}
\usepackage{amsmath}
\usepackage{tikz}
\usepackage[french,english]{babel}
\usepackage{amssymb,amsmath,amsfonts,amsthm,amscd}
\usepackage{indentfirst,graphicx}
\usepackage[font={scriptsize},captionskip=5pt]{subfig}
\usepackage[T1]{fontenc}

\newtheorem{theorem}{Theorem}[section]
\newtheorem{corollary}[theorem]{Corollary}
\newtheorem{proposition}[theorem]{Proposition}
\newtheorem{lemma}[theorem]{Lemma}

\newtheorem{definition}[theorem]{Definition}

\newtheorem{remark}[theorem]{Remark}


\newcommand{\cA}{{\mathcal A}}

\newcommand{\cD}{{\mathcal D}}
\newcommand{\cE}{{\mathcal E}}

\newcommand{\cG}{{\mathcal G}}

\newcommand{\cO}{{\mathcal O}}

\newcommand{\bC}{{\mathbb{C}}}

\newcommand{\bR}{{\mathbb{R}}}

\usepackage{fancyhdr}
\pagestyle{fancy}
\fancyhf{}

\lhead[\footnotesize{\thepage}]{\footnotesize{LIPSCHITZ EQUISINGULARITY ON SQUARE MATRICES}}
\chead[]{}
\rhead[\footnotesize{THIAGO DA SILVA, NIVALDO GRULHA JR. AND MIRIAM PEREIRA}]{\footnotesize{\thepage}}

\begin{document}

\title{REAL AND COMPLEX INTEGRAL CLOSURE, LIPSCHITZ EQUISINGULARITY AND APPLICATIONS ON  SQUARE MATRICES}

\author{Thiago F. da Silva}
\author{Nivaldo G. Grulha Jr.}
\author{Miriam S. Pereira}

\maketitle

\begin{center}
{	{\small \textit{Dedicated to Terence Gaffney and Maria Ruas, on the occasion of their 70th birthday, and to Marcelo Saia, on the occasion of his 60th birthday.}}}
\end{center}

\begin{abstract}
{\small Recently the authors investigated the Lipschitz triviality of simple germs of matrices. In this work, we improve some previous results and we present an extension of an integral closure result for the real setting. These tools are applied to investigate classes of square matrices singularities classified by Bruce and Tari.}
\end{abstract}

\let\thefootnote\relax\footnote{2010 \textit{{Mathematics Subjects Classification} 32S15, 14J17, 32S60.}
	
{	\textit{Key words and phrases.}Bi-Lipschitz Equisingularity, Real and Complex integral closure, The double structure, Finite Determinacy, Canonical vector fields}}

\thispagestyle{empty}

\section*{Introduction}

The study of Lipschitz equisingularity has risen from works of Zariski \cite{Za}, Pham \cite{Pham} and Teissier \cite{PT} and further developed by Parusi\'nski (\cite{PA1,PA2}), Gaffney (\cite{SG,G1,G2}), Fernandes, Ruas (\cite{FR2}) and others.

 In \cite{M1} Mostowski introduced a new technique for the study of this subject from the existence of Lipschitz vector fields. In general, this vector field is not canonical from the variety. Nevertheless, Gaffney \cite{G1} presented conditions to find a canonical Lipschitz vector field in the context of  a family of irreducible curves  {using the \textit{double} structure, defined for ideals in \cite{G2} and generalized for modules in \cite{SG}.}
 
Families of square matrices were first studied by Arnold in \cite{Arnold}, where the parametrised invertible matrices act by conjugation. Recently, many authors presented a series of interesting results about determinacy and classification using parametrised families or smooth changes of coordinates in the source of the germ  (\cite{Bruce}, \cite{BruceTari}, \cite{FK}, \cite{FN} and \cite{Miriam}).

More recently, Gaffney's result was extended in \cite{SGP}, where the authors presented conditions which ensure the canonical vector field is Lipschitz in the context of $1$-unfoldings of singularities of matrices, following the approach of Pereira and Ruas \cite{RP}.

In this work we prove a real version of the result proved in \cite{SGP} in order to investigate the Lipschitz triviality in the real case. Finally, we study some deformations of simple singularities classified by Bruce and Tari \cite{Bruce, BruceTari} in real and complex cases, using a similar approach as in \cite{SGP}. 

\section*{Acknowledgements}


{The first author was supported by Proex ICMC/USP in a visit to S\~ao Carlos, where part of this work was developed. The second author was partially supported by FAPESP and CNPq.
 
\section{Notation and Background}

We start with some notation. Let $\mathbb{K}$ be a field which is $\mathbb{R}$ or $\mathbb{C}$ and let $\mathcal{R}$ be the group of diffeomorphisms $\mathbb{K}^r,0\to\mathbb{K}^r,0$. Let $\mathcal{H}$ denote the set of germs of smooth mappings $\mathbb{K}^r,0\to GL_n(V)\times GL_p(W)$, and $M$
the set of germs $F:\mathbb{K}^r,0\to \mbox{Hom}(V;W)$. The set $\mathcal{H}$ can be endowed with a group structure inherited from the product group in the target.


We define a notion of bi-Lipschitz equivalence between two matrices as in \cite{MP}.


\begin{definition}\label{action}
Let ${\mathcal{G}}={\mathcal{R}}\ltimes {\mathcal{H}}$ be the semi-direct product of ${\mathcal{R}}$ and ${\mathcal{H}}$. We say that two germs $F_1,
\,\ F_2:\mathbb{K}^r,0\to \mbox{Hom}(V;W)$ are $\mathcal{G}$-Lipschitz equivalent if there exist a germ $\phi:(\mathbb{K}^r,0)\rightarrow (\mathbb{K}^r,0)$ of a bi-Lipschitz homeomorphism and germs of continuous mappings \linebreak $X:(\mathbb{K}^r,0)\rightarrow \mbox{GL}_n(V)$, $Y:(\mathbb{K}^r,0)\rightarrow \mbox{GL}_p(W)$ such that $$F_1=X^{-1}(F_2\circ \phi^{-1})Y.$$
\end{definition}

An element of $M$ can also be considered as a map $\mathbb{K}^r,0\to\mathbb{K}^N$, where we identify $\mbox{Hom}(V;W)$ with the $n\times p$ matrices, and $N= np$.

It is not difficult to see that $\mathcal{G}$ is one of Damon's
geometric subgroups of $\mathcal{K}$. As a consequence of Damon's result 
we can use the techniques of singularity theory. For instance, those
concerning finite determinacy (see \cite{Damon}, \cite{Miriam} and \cite{BruceTari}). 

It is possible to determine the tangent space to the orbit for the action of the group $\mathcal{G}$ on $M$. Given a matrix $F$, we write $F_{x(i)}$ for the matrix $\dfrac{\partial F}{\partial x_i}$ and we denote $\cE_r$ for the ring of smooth functions $\mathbb{K}^r, 0\to\mathbb{K}$. So the tangent space could be viewed as an $\cE_r$-submodule of $\cE_N$ spanned by the set of matrices $R_{il}$ (respectively $C_{jm}$) with $l^{\mbox{\tiny{th}}}$ row (respectively $m^{\mbox{\tiny{th}}}$ th column) the $i^{\mbox{\tiny{th}}}$  row of $F$ (respectively  $j^{\mbox{\tiny{th}}}$  column) and with zeros elsewhere, for $1\leq i,\,l\leq n$ and $1\leq j,\, m\leq p$ (see \cite{Damon}, \cite{Miriam} and \cite{BruceTari}).

\section{Real integral closure and Lipschitz Equisingularity}

For the complex case, in \cite{SGP} the authors obtained conditions so that the canonical vector field defined in a family of simple germs of matrices is Lipschitz, depending of a specific inclusion of ideals, involving the integral closure and the double of an ideal.

A new comprehension of the integral closure in the real case plays a key role in the proof of Theorem \ref{T2.4}. Let us recall this notion.


Let $(\cA_n,m_n)$ be the local ring of real analytic functions germs at the origin in $\bR^n$, and let $\cA_n^p$ be the $\cA_n$-free module of rank $p$. For a germ of a real analytic set $(X,x)$, denote by $\cA_{X,x}$ the local ring of real analytic function germs at $(X,x)$.

\begin{definition}Let $I$ be an ideal of $\cA_{X,x}$. An element $h\in\cA_{X,x}$ is in the \textbf{real integral closure} of $I$, denoted $\overline{I}$, if $h\circ\phi\in \phi^*(I)\cA_1$, for all real analytic path $\phi:(\bR,0)\rightarrow (X,x)$.
\end{definition}

For an algebraic definition of the real integral closure of an ideal one can see \cite{B}.

The key step to obtain the main results of \cite{SGP} for the real case is the fact that the definition of the real integral closure of an ideal is equivalent to the following formulation using analytic inequalities.

\begin{theorem}[\cite{G3}]\label{GR}
	Let $I$ be an ideal of $\cA_{X,x}$ and $h\in \cA_{X,x}$. Then: $h\in\overline{I}$ if and only if for each choice of generators $\{f_i\}$ there exist a positive constant $C$ and a neighborhood $U$ of $x$ such that 
	\begin{center}
		$\parallel h(z)\parallel\leq C \underset{i}{\max}\parallel f_i(z) \parallel $
	\end{center}
\noindent for all $z\in U$.	
\end{theorem}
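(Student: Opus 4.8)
The plan is to recast both sides of the equivalence as statements about the order of vanishing in the discrete valuation ring $\cA_1$ and to bridge them with the curve selection lemma for semianalytic sets. A first, routine remark is that the analytic inequality does not depend on the chosen system of generators of $I$: two systems $\{f_i\}$ and $\{g_j\}$ are related by matrices with entries in $\cA_{X,x}$, which are bounded near $x$, so $\max_i\|f_i\|$ and $\max_j\|g_j\|$ differ only by a bounded factor on a neighbourhood of $x$; hence it suffices to establish the equivalence for one fixed system of generators $f_1,\dots,f_k$.

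For the implication ($\Leftarrow$) I would take an arbitrary real analytic path $\phi\colon(\bR,0)\to(X,x)$. If all $f_i\circ\phi$ vanish identically, the inequality forces $h\circ\phi\equiv 0\in\phi^*(I)\cA_1=(0)$. Otherwise set $\ell=\min_i\operatorname{ord}_t(f_i\circ\phi)<\infty$, so that $\phi^*(I)\cA_1=(t^{\ell})$ and $\max_i\|f_i(\phi(t))\|\le K|t|^{\ell}$ near $0$ for some $K>0$; the inequality then gives $\|h(\phi(t))\|\le CK|t|^{\ell}$, hence $h\circ\phi\equiv 0$ or $\operatorname{ord}_t(h\circ\phi)\ge\ell$, and in either case $h\circ\phi\in(t^{\ell})=\phi^*(I)\cA_1$. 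Thus $h\in\overline I$.

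For ($\Rightarrow$) I would argue by contradiction. Suppose $h\in\overline I$ but, for the fixed generators, the inequality fails on every neighbourhood of $x$ for every $C>0$. First note that we cannot have $h(x)\ne 0$ with all $f_i(x)=0$, since along any path that would contradict $h\in\overline I$. Next, for each $n$ I would choose $z_n$ with $\|z_n-x\|<1/n$ and $\|h(z_n)\|>n\max_i\|f_i(z_n)\|$; using $\sum_i\|f_i\|^2\le k\max_i\|f_i\|^2$ this gives $h(z_n)\ne 0$ and $\sigma_n:=\sum_i\|f_i(z_n)\|^2/\|h(z_n)\|^2<k/n^2\to 0$. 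Consider the semianalytic set
\[
\Gamma=\Big\{(z,\sigma)\in X\times\bR\ :\ \|h(z)\|^2\ne 0 \ \text{ and }\ \sigma\,\|h(z)\|^2=\textstyle\sum_{i=1}^{k}\|f_i(z)\|^2\Big\},
\]
the graph of $\sigma=\sum_i\|f_i\|^2/\|h\|^2$. Then $(z_n,\sigma_n)\in\Gamma$ and $(z_n,\sigma_n)\to(x,0)$, while $(x,0)\notin\Gamma$ by the first remark, so $(x,0)\in\overline\Gamma\setminus\Gamma$. The curve selection lemma then provides a real analytic arc $\gamma(t)=(\phi(t),\sigma(t))$ with $\gamma(0)=(x,0)$ and $\gamma(t)\in\Gamma$ for $t\in(0,\varepsilon)$; in particular $\phi\colon(\bR,0)\to(X,x)$ is a real analytic path, $h(\phi(t))\ne 0$ for small $t>0$, $\sigma(0)=0$, and $\sigma(t)\,\|h(\phi(t))\|^2=\sum_i\|f_i(\phi(t))\|^2$ as an identity in $\cA_1$.

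It then remains to count orders in $\cA_1$. The $f_i\circ\phi$ cannot all be identically zero, since otherwise $h\circ\phi\equiv 0$, contradicting $h(\phi(t))\ne 0$; hence $\ell:=\min_i\operatorname{ord}_t(f_i\circ\phi)<\infty$ and $\phi^*(I)\cA_1=(t^{\ell})$. Since each $\|f_i\circ\phi\|^2$ is a sum of squares of real analytic functions, the lowest-order terms in $\sum_i\|f_i\circ\phi\|^2$ cannot cancel, so $\operatorname{ord}_t\big(\sum_i\|f_i\circ\phi\|^2\big)=2\ell$; on the other hand $\operatorname{ord}_t\big(\sigma\,\|h\circ\phi\|^2\big)=\operatorname{ord}_t(\sigma)+2\operatorname{ord}_t(h\circ\phi)$ with $\operatorname{ord}_t(\sigma)\ge 1$ because $\sigma(0)=0$ and $\sigma\not\equiv 0$. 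Equating the two orders yields $\operatorname{ord}_t(h\circ\phi)=\ell-\tfrac12\operatorname{ord}_t(\sigma)<\ell$, that is, $h\circ\phi\notin(t^{\ell})=\phi^*(I)\cA_1$, contradicting $h\in\overline I$. The step I expect to be the main obstacle is the construction around $\Gamma$ --- choosing a semianalytic set whose closure contains $(x,0)$ exactly when the inequality fails, and checking that the arc delivered by the curve selection lemma is genuinely a real analytic path into $(X,x)$ while avoiding the degenerate configurations --- after which the valuation-theoretic order count closes the argument.
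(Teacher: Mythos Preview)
The paper does not give its own proof of this theorem: it is quoted verbatim from Gaffney \cite{G3} and used as a black box in the proof of Theorem~\ref{T2.4}. So there is no in-paper argument to compare against.

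That said, your proposal is correct and is in fact the standard proof strategy for this equivalence (going back to Lejeune--Teissier in the complex case and adapted to the real setting by Gaffney). The easy direction $(\Leftarrow)$ is handled exactly as one should, by reading the inequality along an arbitrary arc and comparing orders in the DVR $\cA_1$. For $(\Rightarrow)$ you encode the failure of the inequality as a semianalytic graph $\Gamma\subset X\times\bR$, use a sequence witnessing failure to place $(x,0)$ in $\overline{\Gamma}\setminus\Gamma$, and then invoke the curve selection lemma to produce a real analytic arc along which the order count gives $\operatorname{ord}_t(h\circ\phi)<\ell=\operatorname{ord}_t\phi^*(I)$. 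The two points you flag as delicate are indeed the only ones requiring care: (i) the arc produced by curve selection is a priori a half-arc on $[0,\varepsilon)$, but since $X$ is real analytic the defining equations of $X$ vanish on an interval and hence identically, so the analytic extension of $\phi$ to a full germ $(\bR,0)\to(X,x)$ is automatic; (ii) your observation that $\sum_i(f_i\circ\phi)^2$ has order exactly $2\ell$ because the leading coefficients are positive squares and cannot cancel is precisely what makes the real case work. Both steps are sound, and the argument closes as you describe.
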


Let us recall some definitions and fix some notations.

Here we  work with one parameter deformations and unfoldings. The parameter space is denoted by $Y=\bR\equiv \bR\times 0$.

\begin{definition}
Let $h\in\cA_N$. The {\bf double of }$h$ is the element denoted by $h_D\in\cA_{2N}$ defined by the equation $$h_D(z,z'):=h(z)-h(z').$$ 

If $h=(h_1,...,h_r)$ is a map, with $h_i\in\cA_N$, $\forall i$, then we define $I_D(h)$ as the the ideal of $\cA_{2N}$ generated by $\{(h_1)_D,...,(h_r)_D\}$.
\end{definition}

We obtain a relation between the real integral closure of the double and the canonical vector field induced by a one parameter unfolding to be Lipschitz.

Let $\tilde{F}: \bR\times\bR^q\longrightarrow \bR\times\bR^n$ be an analytic map, which is a homeomorphism onto its image, and such that we can write $\tilde{F}(y,x)=(y,\tilde{f}(y,x))$, with $\tilde{f}(y,x)=(\tilde{f}_1(y,x),...,\tilde{f}_n(y,x))$. Let us denote by 

$$\frac{\partial}{\partial y} + \sum\limits_{j=1}^{n}\frac{\partial \widetilde{f_j}}{\partial y}\cdot\frac{\partial}{\partial z_j}$$

\noindent the vector field $v: \tilde{F}(\bR\times\bR^q)\longrightarrow \bR\times\bR^n$ given by $$v(y,z)=(1,\frac{\partial \tilde{f}_1}{\partial y}(\tilde{F}^{-1}(y,z)),...,\frac{\partial \tilde{f}_n}{\partial y}(\tilde{F}^{-1}(y,z))).$$

\begin{theorem}\label{T2.4}
The vector field $\frac{\partial}{\partial y} + \sum\limits_{j=1}^{n}\frac{\partial \widetilde{f}}{\partial y}\cdot\frac{\partial}{\partial z_j}$ is Lipschitz if and only if $$I_D(\frac{\partial \tilde{F}}{\partial y})\subseteq\overline{I_D(\tilde{F})}.$$
\end{theorem}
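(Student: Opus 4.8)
The plan is to translate the Lipschitz condition on the vector field $v$ into an analytic inequality and then apply Theorem \ref{GR} (the analytic characterization of the real integral closure) twice, once in each direction. Recall that $v(y,z) = (1, \frac{\partial \tilde f}{\partial y}(\tilde F^{-1}(y,z)))$ is defined on the image $\tilde F(\bR\times\bR^q)$, which is where the singular set lives. Since the first component is constant, $v$ is Lipschitz precisely when the map $(y,z)\mapsto \frac{\partial \tilde f}{\partial y}(\tilde F^{-1}(y,z))$ is Lipschitz on the image. Writing two points on the image as $\tilde F(y,x) = (y, \tilde f(y,x))$ and $\tilde F(y',x') = (y', \tilde f(y',x'))$, the Lipschitz condition becomes: there is a constant $C$ and a neighborhood of the origin such that
\begin{equation*}
\left\| \frac{\partial \tilde f}{\partial y}(y,x) - \frac{\partial \tilde f}{\partial y}(y',x') \right\| \le C\left( |y-y'| + \| \tilde f(y,x) - \tilde f(y',x') \| \right)
\end{equation*}
for all $(y,x),(y',x')$ near the origin. (Here one must be slightly careful: because $\tilde F$ is a homeomorphism onto its image, a point $(y,z)$ in the image determines $(y,x)$ uniquely via $\tilde F^{-1}$, so the left side is a genuine function of the two image points.)

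The next step is to recognize both sides of this inequality as the "max of generators" appearing in Theorem \ref{GR}, computed on the space $X = \tilde F(\bR\times\bR^q)$ but pulled back to $\bR\times\bR^q\times\bR\times\bR^q$ via the doubling map. Indeed, the ideal $I_D(\tilde F)$ is generated in $\cA_{2N}$ by the doubles of the components of $\tilde F$; on the diagonal-type coordinates $(y,x,y',x')$ these doubles are exactly $y - y'$ and $\tilde f_j(y,x) - \tilde f_j(y',x')$ for $j = 1,\dots,n$, so $\max$ of the absolute values of the generators of $I_D(\tilde F)$ is comparable (up to a fixed constant depending only on $n$) to $|y-y'| + \|\tilde f(y,x)-\tilde f(y',x')\|$. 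Likewise the generators of $I_D(\frac{\partial \tilde F}{\partial y})$ are the doubles of the components of $\frac{\partial \tilde F}{\partial y} = (0, \frac{\partial \tilde f}{\partial y})$, whose max is comparable to $\left\| \frac{\partial \tilde f}{\partial y}(y,x) - \frac{\partial \tilde f}{\partial y}(y',x') \right\|$. Thus the displayed Lipschitz inequality is, up to harmless constants, precisely the statement that each generator $h$ of $I_D(\frac{\partial \tilde F}{\partial y})$ satisfies $\|h(z)\| \le C\max_i \|f_i(z)\|$ where $\{f_i\}$ generate $I_D(\tilde F)$; by Theorem \ref{GR} this is equivalent to $I_D(\frac{\partial \tilde F}{\partial y}) \subseteq \overline{I_D(\tilde F)}$.

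Putting the two directions together: if the vector field is Lipschitz, the inequality above holds, hence each generator of $I_D(\frac{\partial\tilde F}{\partial y})$ is bounded by the max of the generators of $I_D(\tilde F)$, so by Theorem \ref{GR} lies in $\overline{I_D(\tilde F)}$, giving the inclusion; conversely, if the inclusion holds, Theorem \ref{GR} supplies exactly the bound needed to conclude that $z\mapsto \frac{\partial\tilde f}{\partial y}(\tilde F^{-1}(z))$ is Lipschitz, hence so is $v$. The main obstacle I expect is bookkeeping rather than conceptual: one has to be careful that the "analytic path" / "analytic inequality" characterization of Theorem \ref{GR} is being applied on the correct ambient real analytic germ — namely the double $X \times X$ or, more precisely, the image of $(\tilde F, \tilde F)$ restricted to a suitable neighborhood — and that $\tilde F$ being a homeomorphism onto its image is genuinely used to make $\frac{\partial \tilde f}{\partial y}\circ \tilde F^{-1}$ well defined and to identify the relevant germ. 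One should also double-check that the comparability between $\max_i \|f_i\|$ and $|y-y'| + \|\tilde f(y,x) - \tilde f(y',x')\|$ (and similarly for the derivative ideal) only costs constants depending on $n$, which is immediate from the equivalence of norms on $\bR^n$, so the constant $C$ in Theorem \ref{GR} can be absorbed into the Lipschitz constant and vice versa.
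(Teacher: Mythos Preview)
Your proposal is correct and follows essentially the same approach as the paper: both arguments unwind the Lipschitz condition on $v$ as an inequality between $\left\|(\partial\tilde f_j/\partial y)_D\right\|$ and $\max_i\|(\tilde F_i)_D\|$ on $(\bR\times\bR^q)^2$, and then invoke Theorem~\ref{GR} in each direction, using that $\tilde F$ is a homeomorphism onto its image to pass between neighborhoods in the source and in the image. The paper streamlines the norm bookkeeping by fixing the maximum norm at the outset, whereas you handle it via equivalence of norms, but there is no substantive difference.
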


\begin{proof}

Since we are working in a finite dimensional $\bR$-vector space then all the norms are equivalent. To simplify the argument, we use the notation $\Vert . \Vert$ for the \textit{maximum norm} on $\bR\times\bR^q$ and $\bR\times\bR^n$, i.e, $\Vert (x_1,...,x_{n+1})\Vert = \max_{i=1}^{n+1}\{\Vert x_i\Vert\}$.

Suppose the canonical vector field is Lipschitz. By hypothesis there exists a constant $c>0$ such that $$\parallel v(y,z)-v(y',z') \parallel \leq c\parallel (y,z)-(y',z') \parallel$$ $\forall (y,z),(y',z')\in U$, where $U$ is an open subset of  $\tilde{F}(\bR\times\bR^q)$.

\vspace{0,3cm}

Thus, given $(y,x),(y',x')\in \tilde{F}^{-1}(U)$, and applying the above inequality on these points, we get $$\parallel (\frac{\partial \tilde{f}_j}{\partial y})_D(y,x,y',x') \parallel \leq c\parallel \tilde{F}(y,x)-\tilde{F}(y',x') \parallel $$ for all $j=1,...n$. By the previous theorem, each generator of $I_D(\frac{\partial \tilde{F}}{\partial y})$ belongs to $\overline{I_D(\tilde{F})}$.

\vspace{0,5cm}

Now suppose that $I_D(\frac{\partial \tilde{F}}{\partial y})\subset\overline{I_D(\tilde{F})}$.  Using the hypothesis and  Theorem \ref{GR}, for each $j\in\{1,...n\}$ there exists a constant $c_j>0$ and an open subset $U_j\subset\bR\times\bR^q$ such that $$\parallel (\frac{\partial \tilde{f}_j}{\partial y})_D(y,x,y',x') \parallel \leq c_j\parallel \tilde{F}(y,x)-\tilde{F}(y',x') \parallel $$

$\forall (y,x),(y',x')\in U_j$. Take $U:=\bigcap\limits_{j=1}^{n}U_j$, $c:=\max\{c_j\}_{j=1}^{n}$ and $V:=\tilde{F}(U)$, which is an open subset of $\tilde{F}(\bR\times\bR^q)$, since $\tilde{F}$ is a homeomorphism onto its image. Hence, $$\parallel v(y,z)-v(y',z') \parallel \leq c\parallel (y,z)-(y',z') \parallel$$ $\forall (y,z),(y',z')\in V$.

Therefore, the vector field $\frac{\partial}{\partial y} + \sum\limits_{j=1}^{n}\frac{\partial \tilde{f}_j}{\partial y}\cdot\frac{\partial}{\partial z_j}$ is Lipschitz.
\end{proof}


\begin{corollary}\label{2.2}
Suppose that $\tilde{F}:\bR\times\bR^q\longrightarrow \bR\times \mbox{Hom}(\bR^m,\bR^n)$ is an analytic map and a homeomorphism onto its image, and suppose we can write $$\tilde{F}(y,x)=(y,F(x)+y\theta(x))$$.

\begin{enumerate}
\item [a)] The vector field $\frac{\partial}{\partial y} + \sum\limits_{j=1}^{n}\frac{\partial \widetilde{f}}{\partial y}\cdot\frac{\partial}{\partial z_j}$ is Lipschitz if, and only if, $$I_D(\theta)\subseteq\overline{I_D(\tilde{F})}.$$

\item [b)] If $\theta$ is constant then the vector field $\frac{\partial}{\partial y} + \sum\limits_{j=1}^{n}\frac{\partial \widetilde{f}}{\partial y}\cdot\frac{\partial}{\partial z_j}$ is Lipschitz.
\end{enumerate} 
\end{corollary}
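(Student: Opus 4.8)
The plan is to deduce both items directly from Theorem~\ref{T2.4}; the only real task is to identify the ideal $I_D\!\left(\frac{\partial \tilde F}{\partial y}\right)$ when $\tilde F$ has the special product--unfolding shape $\tilde F(y,x)=(y,F(x)+y\theta(x))$.

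First I would observe that such a $\tilde F$ is exactly of the form required by Theorem~\ref{T2.4}: setting $\tilde f(y,x)=F(x)+y\theta(x)$ we have $\tilde F(y,x)=(y,\tilde f(y,x))$, which is analytic and, by hypothesis, a homeomorphism onto its image. Differentiating in $y$ gives $\frac{\partial \tilde f}{\partial y}(y,x)=\theta(x)$, so the map $\frac{\partial \tilde F}{\partial y}$ has as its components the constant function $1$ (coming from $\partial y/\partial y$) together with the components of $\theta$. Since the double of a constant function vanishes identically, the generator coming from the $y$--direction component is $0$, and therefore $I_D\!\left(\frac{\partial \tilde F}{\partial y}\right)=I_D(\theta)$ as ideals in the relevant local ring of the double space. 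Part~a) is then precisely the statement of Theorem~\ref{T2.4} for this $\tilde F$.

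For part~b), if $\theta$ is constant then each of its components is a constant function, so its double is $\theta_j-\theta_j=0$; hence $I_D(\theta)$ is the zero ideal, which is trivially contained in $\overline{I_D(\tilde F)}$. Applying part~a) gives that the canonical vector field is Lipschitz, which finishes the proof.

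I do not anticipate a genuine obstacle here: the argument is a bookkeeping reduction to Theorem~\ref{T2.4}. The single point deserving a line of care is the equality $I_D\!\left(\frac{\partial \tilde F}{\partial y}\right)=I_D(\theta)$, i.e.\ the fact that the $y$--direction component contributes only the zero generator to the double; once that is spelled out, both items are immediate, and the running assumption that $\tilde F$ be a homeomorphism onto its image is used only to quote Theorem~\ref{T2.4} verbatim.
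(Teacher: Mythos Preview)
Your argument is correct and is exactly the deduction the paper intends: the corollary is stated there without proof, as an immediate consequence of Theorem~\ref{T2.4}, and what you wrote---identifying $I_D\!\left(\frac{\partial \tilde F}{\partial y}\right)$ with $I_D(\theta)$ because $\frac{\partial \tilde f}{\partial y}=\theta$ and the double of the constant $y$--component vanishes---is precisely that deduction made explicit.
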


\section{Applications in some classes of square matrices}

In this section we study if the Lipschitz condition is satisfied on the canonical vector field naturally associated to the 1-unfolding of a $\mathcal{G}$-simple square matrices singularities classified in \cite{Bruce, BruceTari}. Our goal is to obtain a better understanding of its behaviour. In \cite{SGP} we consider versal deformation of determinantal singularities of codimension $2$ and we showed this behaviour depends on the type of the normal form. 


The next result presents a part of the classification of $\mathcal{G}$-simple symmetric matrices obtained by Bruce on Theorem 1.1  of \cite{Bruce}.

\begin{proposition}
The $\mathcal{G}$-simple germs $F: \mathbb{C}^{2} \to Sym_{2}$ of rank $0$ at the origin are given in the following table.
\end{proposition}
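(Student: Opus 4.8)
The plan is to recall that this is a \emph{classification} statement, so the ``proof'' is really a reference-and-verification argument rather than a self-contained derivation. The strategy has three parts: (i) set up the $\mathcal{G}$-action and the relevant finite-determinacy machinery on the space $M$ of germs $F\colon\mathbb{C}^2,0\to Sym_2$ with $F(0)=0$ (rank $0$ at the origin); (ii) cite Bruce's Theorem 1.1 of \cite{Bruce} as the source of the table, translating his normal forms into the notation used here; and (iii) indicate which computations certify that each listed germ is $\mathcal{G}$-simple and that the list is exhaustive. Since the proposition literally reproduces ``a part of the classification obtained by Bruce,'' I would keep the proof short and make the reference do the heavy lifting.

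First I would recall that, by the discussion after Definition \ref{action}, $\mathcal{G}=\mathcal{R}\ltimes\mathcal{H}$ is one of Damon's geometric subgroups of $\mathcal{K}$, so the standard finite-determinacy and unfolding theory applies: a germ is $\mathcal{G}$-finitely determined iff its extended tangent space has finite codimension in $M$, and the tangent space is the $\mathcal{E}_2$-module generated by the coordinate derivatives $\partial F/\partial x_i$ together with the symmetrized row/column matrices $R_{il}, C_{jm}$ adapted to the symmetric setting (for symmetric matrices the relevant group replaces $GL_n(V)\times GL_p(W)$ by a congruence action $F\mapsto X^t F X$). Then $\mathcal{G}$-simplicity means the germ has a neighbourhood in $M$ meeting only finitely many $\mathcal{G}$-orbits. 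With these definitions recalled, I would state that for $F\colon\mathbb{C}^2\to Sym_2$ with $j^1F=0$, a direct computation of the tangent spaces of the candidate normal forms, exactly as carried out in \cite{Bruce}, shows each entry in the table is a simple germ, and a modality count (again from \cite{Bruce}, or reproducible via the method of complete transversals) shows no further simple germs occur.

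The main obstacle, if one insisted on a genuinely self-contained proof, would be the exhaustiveness claim: one must rule out the existence of additional $\mathcal{G}$-simple germs of rank $0$, which requires either a careful stratification of the jet spaces $J^k(2, Sym_2)$ under the induced $\mathcal{G}$-action and a modality estimate along the adjacent orbits, or the complete-transversal/Mather–Yau style argument that Bruce uses. I expect to dispatch this by invoking \cite{Bruce} directly rather than reproducing the jet-space calculation. Thus the proof I would write is essentially: ``This is Theorem~1.1 of \cite{Bruce}, restricted to the corank-type stratum $F(0)=0$; the normal forms are obtained by the standard complete-transversal method for the geometric subgroup $\mathcal{G}$, and simplicity is verified by the finite-codimension computation of the extended tangent space for each entry,'' followed by the table itself.
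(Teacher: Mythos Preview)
Your proposal is correct and matches the paper's approach exactly: the paper gives no proof at all for this proposition, merely introducing it as ``a part of the classification of $\mathcal{G}$-simple symmetric matrices obtained by Bruce on Theorem 1.1 of \cite{Bruce},'' so the statement is purely a citation of Bruce's result. Your plan to invoke \cite{Bruce} directly, with a brief recollection of the $\mathcal{G}$-action and finite-determinacy setup, is precisely in that spirit (and in fact more detailed than what the paper provides).
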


\vspace{0.5cm}
\begin{center}
\begin{tabular}{lccc}
	\hline
	   &   Normal Form   &  & Discriminant \\
	\hline
	
	1. & $\left( \begin{matrix}
	y^k  &  x \\
	x     &  y^{\ell}
	\end{matrix}\right)$  & $k\geq1,\ell\geq 2$  & $\cA_{k+\ell +1}$\\  
	
	2. & $\left( \begin{matrix}
	x  &  0 \\
	0     &  y^2+x^k
	\end{matrix}\right)$ & $k\geq2$  & $D_{k+2}$\\  
	
	3. & $\left( \begin{matrix}
	x  &  0 \\
	0     &  xy+y^k
	\end{matrix}\right)$ & $k\geq2$  & $D_{2k}$\\  
	
	4. & $\left( \begin{matrix}
	x  &  y^k \\
	y^k     &  xy
	\end{matrix}\right)$ & $k\geq2$  & $D_{2k+1}$\\  
	
	5. & $\left( \begin{matrix}
	x  &  y^2 \\
	y^2     &  x^2
	\end{matrix}\right)$ &   & $E_6$\\  
	
	6. & $\left( \begin{matrix}
	x  &  0 \\
	0     &  x^2+y^3
	\end{matrix}\right)$ &   & $E_7$\\  
\end{tabular}
\end{center}

In the following result we establish conditions for the Lipschitz triviality of the canonical vector field associated to the normal forms introduced in the above proposition. Differently from the cases exhibited on \cite{SGP}, here we present examples with different nature. Taking the versal deformation of a normal formal we can find  directions that produce Lipschitz trivial deformations, Lipschitz deformations off the origin or non-Lipschitz.

\begin{proposition}
Following the table of normal forms of  $\mathcal{G}$-simple germs $F: \mathbb{C}^{2} \to Sym_{2}$ of rank $0$ at the origin, the canonical vector field associated to the $1$-parameter deformation $\tilde{F}$ induced by $\theta\in\frac{Sym_2}{T\cG_{e}F}$ is Lipschitz in the following conditions: 

\begin{enumerate}

\item[1.]{For the normal form 1 of the table, if the canonical vector field associated to $\tilde{F}$ is Lipschitz then $\theta$ can be written on the form $$\theta=\left(	\begin{matrix}
		a_0+\sum\limits_{i=r}^{k-1}a_iy^i   &  0   \\
		0    & b_0+ \sum\limits_{j=r}^{\ell-2}b_jy^j  \\
		\end{matrix}\right)$$\noindent with $a_i,b_j\in\bC$ and $r=\min\{k,\ell\}$.}
\item[2.]{For the normal form 2 of the table, the canonical vector field associated to $\tilde{F}$ is Lipschitz if and only of $\theta$ can be written on the form $\theta=\left(	\begin{matrix}
		a   &  b   \\
		b    &  \sum\limits_{i=0}^{k-2}d_ix^i  \\
		\end{matrix}\right)$, with $a,b,d_i\in\bC$.}
\item[3.]{For the normal form 3 of the table, the canonical vector field associated to $\tilde{F}$ is Lipschitz if and only of $\theta$ is constant.}

\item[4.]{For the normal form 4 of the table,the canonical vector field associated to $\tilde{F}$ is Lipschitz if and only of $\frac{\partial\tilde{F}}{\partial y}=\frac{\partial F}{\partial y}$, i.e, $\theta$ can be written on the form $\theta=\left(	\begin{matrix}
	a   &  b   \\
	b    &  \sum\limits_{j=0}^{k-1}b_jx^j  \\
	\end{matrix}\right)$, with $a,b,b_j\in\bC$.}

\item[5.]{For the normal form 5 of the table, the canonical vector field associated to the $1$-parameter deformation $\tilde{F}$ induced by $\theta\in\frac{Sym_2}{T\cG_{e}F}$ is Lipschitz if and only if the $1$-jet type of $\tilde{F}$ and $F$ agree.}

\item[6.]{For the normal form 6 of the table, the canonical vector field associated to $\tilde{F}$ is Lipschitz if and only of $\theta$ is constant.}
\end{enumerate}

\end{proposition}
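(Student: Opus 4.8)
The plan is to treat each of the six normal forms separately, reducing in every case to a concrete computation with the criterion of Corollary \ref{2.2}(a): the canonical vector field of the deformation $\tilde F(y,x)=(y,F(x)+y\theta(x))$ is Lipschitz if and only if $I_D(\theta)\subseteq\overline{I_D(\tilde F)}$. First I would fix coordinates $(x,y)$ on the source and $(x',y')$ on the second copy, and write down explicitly the generators of the two ideals in $\cA_4=\cA_{\{x,y,x',y'\}}$: the ideal $I_D(\tilde F)$ is generated by the doubles $g_D(x,y,x',y')=g(x,y)-g(x',y')$ of the entries $g$ of $F+y\theta$, while $I_D(\theta)$ is generated by the doubles of the entries of $\theta$. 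For the sufficiency direction I would then invoke Theorem \ref{GR} and produce, for each generator $h_D$ of $I_D(\theta)$, an analytic inequality $\|h_D(z)\|\le C\max_i\|(g_i)_D(z)\|$ on a neighborhood of the origin in $\bR^4$ (and afterwards complexify, exactly as is done implicitly throughout \cite{SGP}, since the normal forms live over $\bC$); for the necessity direction I would test the putative membership against well-chosen real-analytic arcs $\phi(t)=(x(t),y(t),x'(t),y'(t))$ to force the stated shape of $\theta$, or to exhibit an entry of $\theta$ whose double does not lie in $\overline{I_D(\tilde F)}$.

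The concrete mechanism in each case is the following. In case 2, with $F=\mathrm{diag}(x,\,y^2+x^k)$ and $\theta=\begin{pmatrix}\alpha&\beta\\\beta&\delta\end{pmatrix}$, the deformed entries are $x+y\alpha$, $y\beta$, $y^2+x^k+y\delta$; the first double already controls $x-x'$ up to the small term $y\alpha-y'\alpha'$, so one gets $x-x'\in\overline{I_D(\tilde F)}$, and then the off-diagonal double $y\beta-y'\beta'$ is controlled provided $\beta$ is constant, while the $(2,2)$-double forces $\delta$ to depend only on $x$ of degree $\le k-2$ so that $y^2-y'^2$ and the difference of the $x^k$-parts can be absorbed; the reverse inclusion is checked by the same inequalities. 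Cases 3, 4, 6 are the \emph{rigid} ones: here the presence of a pure power of $y$ (namely $y^k$, $y^k$ at the off-diagonal, and $x^2+y^3$) means that along the arc $x=x'=0$, $y=t$, $y'=0$ the ideal $I_D(\tilde F)$ behaves like a high power of $t$, so any nonconstant entry of $\theta$ — which would contribute a strictly lower-order term to $I_D(\theta)$ after multiplication by $y$ — fails the inequality; conversely if $\theta$ is constant then $y\theta-y'\theta'=(y-y')\theta$ and $y-y'$ is visibly dominated by the $(1,1)$-double $x-x'+\cdots$ together with the other generators. Cases 1 and 5 are intermediate: in case 1, $F=\begin{pmatrix}y^k&x\\x&y^\ell\end{pmatrix}$, the off-diagonal double gives $x-x'$ for free, the diagonal doubles give $y^k-y'^k$ and $y^\ell-y'^\ell$, hence $y^r-y'^r$ with $r=\min\{k,\ell\}$ after absorbing the $y\theta$ perturbation; testing on the arc $y=t,y'=0$ forces each diagonal entry of $\theta$ to be a polynomial in $y$ starting in degree $\ge r$ (the off-diagonal entries of $\theta$ are killed because $y\cdot(\text{off-diag})$ cannot be dominated by $x-x'$ along $x=x'$). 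In case 5, $F=\begin{pmatrix}x&y^2\\y^2&x^2\end{pmatrix}$, the $(1,1)$-double is $x-x'$ up to $y\alpha-y'\alpha'$, and a short arc analysis shows the Lipschitz condition is equivalent to the $1$-jets of $\tilde F$ and $F$ coinciding, i.e. $\theta$ having the same linear part as the identically-zero contribution forces $\theta$ to lie in the appropriate jet class.

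The step I expect to be the main obstacle is the \emph{necessity} direction in the borderline cases 1 and 5, where one must show not merely that \emph{some} arc obstructs membership but that the full collection of admissible $\theta$ is exactly the stated one — i.e. one needs a sufficiently rich family of test arcs (polynomial arcs $y=t$, $y'=c\,t$, $x=a\,t^m$, $x'=a'\,t^m$ with the exponents and coefficients tuned) to pin down every monomial of $\theta$ that is forbidden, and simultaneously verify that every remaining monomial does satisfy the analytic inequality of Theorem \ref{GR}. Organizing this as a clean statement about the leading exponents of the generators of $I_D(\tilde F)$ along arcs (a Newton-polygon / order-of-vanishing bookkeeping) is the technical heart; once that is in place, cases 2, 3, 4, 6 follow by the same bookkeeping with the outcome being respectively a mild constraint (case 2) or full rigidity (cases 3, 4, 6). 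Finally I would remark that passing between the real formulation of Corollary \ref{2.2} and the complex normal forms of the preceding Proposition is legitimate because the defining data are polynomial with real coefficients, so the real-arc criterion and the complex integral-closure criterion detect the same inclusions, which is precisely the bridge provided by Theorem \ref{T2.4} and its corollary together with the classical complex results used in \cite{SGP}.
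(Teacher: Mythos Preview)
Your overall strategy---reduce to the criterion $I_D(\theta)\subseteq\overline{I_D(\tilde F)}$ from Corollary~\ref{2.2}, then test with arcs for necessity and verify an inclusion for sufficiency---is exactly the paper's method, and each normal form is indeed treated by a separate lemma. But several points in your execution are either missing or wrong.

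\textbf{The normal space must be computed first.} You never write down generators for $\frac{Sym_2}{T\cG_eF}$, yet this is an essential input: many of the constraints you try to extract from the Lipschitz condition (off-diagonal zero in case~1, $(2,2)$-entry a polynomial in $x$ of degree $\le k-2$ in case~2, etc.) are already forced by $\theta$ lying in the normal space and are \emph{not} consequences of $I_D(\theta)\subseteq\overline{I_D(\tilde F)}$. The paper begins each lemma by listing the generators of $\frac{Sym_2}{T\cG_eF}$, so that $\theta$ has a known finite shape before any arc is tested.

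\textbf{Case 4 is not rigid.} You group case~4 with cases~3 and~6 and claim $\theta$ must be constant. The statement says otherwise: the $(2,2)$-entry of $\theta$ may be any $\sum_{j=0}^{k-1}b_jx^j$. Your arc $x=x'=0$ does not see this $x$-dependence, and the paper's converse for case~4 is precisely that once the $y$-dependent coefficients $a_i$ vanish one has $I_D(\theta)=\langle\sum b_j(x^j-x'^j)\rangle\subset\langle x-x'\rangle\subset I_D(\tilde F)$.

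\textbf{Confusion about $I_D(\theta)$.} You write ``if $\theta$ is constant then $y\theta-y'\theta'=(y-y')\theta$'': this is the double of $t\theta$ (with $t$ the deformation parameter, which you also call $y$), not the double of $\theta$. Corollary~\ref{2.2} concerns $I_D(\theta)$, generated by $\theta_{ij}(x,y)-\theta_{ij}(x',y')$ with no parameter; when $\theta$ is constant this ideal is zero and the inclusion is trivial. Relatedly, you overload $y$ as both the deformation parameter and the second source coordinate; the paper uses $t$ for the parameter and works in the six doubled variables $(t,x,y,t',x',y')$, always choosing test curves with $t=t'$.

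\textbf{The paper's arguments are simpler than what you anticipate.} For necessity, one explicit curve of the shape $\phi(s)=(s^m,2s^m,2s,s^m,s^m,s)$ (with $m$ tuned to each case) suffices to show $\phi^*(I_D(\tilde F))\subset\langle s^r\rangle$ and kill all forbidden coefficients simultaneously; no family of arcs or Newton-polygon bookkeeping is needed. For sufficiency, the paper always obtains an honest ideal inclusion $I_D(\theta)\subset I_D(\tilde F)$, not merely into the integral closure, so no analytic inequalities are invoked. Finally, these six lemmas are over $\bC$ and use the complex criterion already established in \cite{SGP}; the real Theorem~\ref{T2.4} and Corollary~\ref{2.2} are used only in the later theorem on real square matrices, so no ``complexification'' step is required here.
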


The proof follows from the following lemmas.
\newpage
\begin{lemma}
	Let $F: (\mathbb{C}^2,0)\to Sym_2$ be a $\cG$-simple germ of rank $0$ at the origin whose discriminant of type $\cA_{k+\ell-1}$. Let $\tilde{F}$ be a deformation induced by $\theta\in\frac{Sym_2}{T\cG_{e} F}$. If the canonical vector field associated to $\tilde{F}$ is Lipschitz then $\theta$ can be written on the form \\$\theta=\left(	\begin{matrix}
		a_0+\sum\limits_{i=r}^{k-1}a_iy^i   &  0   \\
		0    & b_0+ \sum\limits_{j=r}^{\ell-2}b_jy^j  \\
		\end{matrix}\right)$, with $a_i,b_j\in\bC$ and $r=\min\{k,\ell\}$.
		
		In particular, in the case $\ell=k$, the canonical vector field associated to $\tilde{F}$ is Lipschitz if and only if $\theta$ is constant.
\end{lemma}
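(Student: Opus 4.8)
The plan is to reduce the statement to the ideal-inclusion criterion of Corollary \ref{2.2}(a) (in the complex case, to its analogue proved in \cite{SGP}), and then to read off the constraints on $\theta$ by restricting the associated analytic inequality to a single, carefully chosen arc. After a smooth $\cG$-equivalence I may assume that $F$ is literally the normal form $\left(\begin{smallmatrix} y^k & x \\ x & y^\ell\end{smallmatrix}\right)$; the deformation parameter is then a new variable $t$ and $\tilde F(t,x,y)=\bigl(t,\ F(x,y)+t\,\theta(x,y)\bigr)$.

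First I would determine a normal form for $\theta$ modulo $T\cG_e F$. A computation of the extended tangent space shows that $T\cG_e F$ is the $\cE_2$-module generated by $\partial F/\partial x$, $\partial F/\partial y$ and the matrices $B^{\mathrm{t}}F+FB$ with $B\in\mathfrak{gl}_2$. The generator $\partial F/\partial x=\left(\begin{smallmatrix}0&1\\1&0\end{smallmatrix}\right)$ absorbs all off-diagonal symmetric germs; the matrices $\left(\begin{smallmatrix}x&0\\0&0\end{smallmatrix}\right),\left(\begin{smallmatrix}0&0\\0&x\end{smallmatrix}\right),\left(\begin{smallmatrix}y^k&0\\0&0\end{smallmatrix}\right),\left(\begin{smallmatrix}0&0\\0&y^\ell\end{smallmatrix}\right)$ all lie in $T\cG_e F$; and $\partial F/\partial y=\left(\begin{smallmatrix}ky^{k-1}&0\\0&\ell y^{\ell-1}\end{smallmatrix}\right)$ couples the two diagonal slots. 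Hence every class of $Sym_2/T\cG_e F$ has a representative $\theta=\mathrm{diag}\bigl(\sum_{i=0}^{k-1}a_i y^i,\ \sum_{j=0}^{\ell-2}b_j y^j\bigr)$, the $(1,1)$-range stopping at $k-1$ and the $(2,2)$-range at $\ell-2$ because $\partial F/\partial y$ lets one trade the $y^{\ell-1}$-coefficient of the $(2,2)$ entry against the $y^{k-1}$-coefficient of the $(1,1)$ entry (this accounts for the $\cG_e$-codimension $k+\ell-1$, in agreement with the discriminant $\cA_{k+\ell-1}$). Since adding an element of $T\cG_e F$ to $\theta$ produces a $\cG$-equivalent unfolding, and therefore does not change whether the canonical vector field is Lipschitz, I may assume $\theta$ has this form.

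Next I apply the criterion: the canonical vector field of $\tilde F$ is Lipschitz if and only if $I_D(\theta)\subseteq\overline{I_D(\tilde F)}$. With $p(y)=\sum a_i y^i$ and $q(y)=\sum b_j y^j$, in the doubled coordinates $(t,x,y,t',x',y')$ one has $I_D(\theta)=\langle p(y)-p(y'),\ q(y)-q(y')\rangle$ (the off-diagonal component of $\theta$ being zero), while $I_D(\tilde F)=\langle t-t',\ x-x',\ y^k+tp(y)-{y'}^k-t'p(y'),\ y^\ell+tq(y)-{y'}^\ell-t'q(y')\rangle$. By the analytic inequality description of integral closure (Theorem \ref{GR}, and its complex counterpart), the inclusion $p(y)-p(y')\in\overline{I_D(\tilde F)}$ gives, on a neighbourhood of the origin,
\[
\bigl|p(y)-p(y')\bigr|\ \le\ C\max\bigl\{\,|t-t'|,\,|x-x'|,\,|y^k+tp(y)-{y'}^k-t'p(y')|,\,|y^\ell+tq(y)-{y'}^\ell-t'q(y')|\,\bigr\}.
\]
Restricting to the arc $t=t'=x=x'=y'=0$, $y=s\to 0$, the right-hand side becomes $C\max\{|s|^k,|s|^\ell\}=C|s|^{r}$ with $r=\min\{k,\ell\}$, hence $p(s)-p(0)=O(s^{r})$, i.e. $a_1=\dots=a_{r-1}=0$; the same arc applied to $q(y)-q(y')$ forces $b_1=\dots=b_{r-1}=0$. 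Together with the normal form above, this is exactly the asserted shape of $\theta$.

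For the last assertion, set $\ell=k$, so that $r=k$: the constraints just obtained annihilate every $a_i$ and $b_j$ of positive index, so $\theta$ must be constant, which is the \emph{only if} direction. Conversely, if $\theta$ is constant then $\theta_D\equiv 0$, whence $I_D(\theta)=0\subseteq\overline{I_D(\tilde F)}$ trivially (equivalently, one invokes Corollary \ref{2.2}(b) and its complex analogue), so the canonical vector field is Lipschitz. The step requiring genuine care is the tangent-space computation, and specifically the coupling induced by $\partial F/\partial y$ that brings the $(2,2)$-range down to $\{0,\dots,\ell-2\}$ instead of $\{0,\dots,\ell-1\}$; everything afterwards is a single arc estimate.
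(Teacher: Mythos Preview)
Your argument is correct and follows the same overall route as the paper: reduce to the criterion $I_D(\theta)\subseteq\overline{I_D(\tilde F)}$ from Corollary~\ref{2.2}(a) (resp.\ its complex analogue in \cite{SGP}), write $\theta$ in the normal form coming from the basis of $Sym_2/T\cG_eF$, and then test the inclusion along a single analytic arc to force $a_1=\cdots=a_{r-1}=b_1=\cdots=b_{r-1}=0$.

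The only noteworthy difference is the choice of arc. The paper pulls back along $\phi(s)=(s^{k+\ell},2s^{k+\ell},2s,s^{k+\ell},s^{k+\ell},s)$ in the doubled coordinates $(t,x,y,t',x',y')$, computes $\phi^\ast(I_D(\tilde F))=\langle s^r\rangle$, and reads off $\sum_i a_i(2^i-1)s^i,\ \sum_j b_j(2^j-1)s^j\in\langle s^r\rangle$. Your arc $t=t'=x=x'=y'=0,\ y=s$ is simpler and reaches the same conclusion more directly, since on it the generators of $I_D(\tilde F)$ reduce to $0,0,s^k,s^\ell$ and those of $I_D(\theta)$ to $p(s)-p(0),\ q(s)-q(0)$. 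Either arc works; yours avoids the auxiliary factors $2^i-1$. You are also more explicit than the paper in justifying why the $(2,2)$-range stops at $\ell-2$ (via the coupling from $\partial F/\partial y$), which the paper simply states. The ``in particular'' clause is handled identically in both.
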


\begin{proof}
	The normal form of $F$ is $\left(
	\begin{matrix}
	y^k   &  x   \\
	x     & y^{\ell}  \\
	\end{matrix}\right)$.
	
	
	
	Then, the normal space $\frac{Sym_2}{T\cG_{e}F}$ is generated by 
	
	$$\left\{\left(	\begin{matrix}
	1   &  0   \\
	0     & 0  \\
	\end{matrix}\right),\left(	\begin{matrix}
	0   &  0   \\
	0     & 1  \\
	\end{matrix}\right),\left(	\begin{matrix}
	y   &  0   \\
	0     & 0  \\
	\end{matrix}\right),...,\left(	\begin{matrix}
	y^{k-1}   &  0   \\
	0     & 0  \\
	\end{matrix}\right),\left(	\begin{matrix}
	0  &  0   \\
	0     & y  \\
	\end{matrix}\right),...,\left(	\begin{matrix}
	0   &  0   \\
	0     & y^{\ell-2}  \\
	\end{matrix}\right)\right\}.$$
	
	If $\theta \in\frac{Sym_2}{T\cG_{e} F}$ then $\theta$ is a $\bC$-linear combination of the above elements, i.e, there exist $a_i,b_j\in\bC$ such that $$\theta=\left(	\begin{matrix}
	\sum\limits_{i=0}^{k-1}a_iy^i   &  0   \\
	0     & \sum\limits_{j=0}^{\ell-2}b_jy^j  \\
	\end{matrix}\right).$$
	
	Thus, $\tilde{F}=\left(	\begin{matrix}
	y^k+t\sum\limits_{i=0}^{k-1}a_iy^i   &  x   \\
	x     & y^k+t\sum\limits_{j=0}^{k-2}b_jy^j  \\
	\end{matrix}\right)$. 
	
	Notice that $I_D(\tilde{F})$ is generated by $$\{x-x', y^k-y'^k+t\sum\limits_{i=1}^{k-1}a_i(y^i-y'^i),y^{\ell}-y'^{\ell}+t\sum\limits_{j=1}^{\ell-2}b_j(y^j-y'^j)\}$$ and $I_D(\theta)$ is generated by $\left\{\sum\limits_{i=1}^{k-1}a_i(y^i-y'^i),\sum\limits_{j=1}^{\ell-2}b_j(y^j-y'^j)\right\}$.
	
	Consider the curve $\phi(s)=(s^{k+\ell},2s^{k+\ell},2s,s^{k+\ell},s^{k+\ell},s)$. Thus, $\phi^*(I_D(\tilde{F}))=\langle  s^{k+\ell}, (2^k-1)s^k+s^{k+\ell}\sum\limits_{i=1}^{k-1}a_i(2^i-1)s^i, (2^{\ell}-1)s^{\ell}+s^{k+\ell}\sum\limits_{j=1}^{\ell-2}b_j(2^j-1)s^j   \rangle$ which is contained in $\langle s^r \rangle$. Since $I_D(\theta)\subseteq\overline{I_D(\tilde{F})}$ then $$\langle        \sum\limits_{i=1}^{k-1}a_i(2^i-1)s^i,  \sum\limits_{j=1}^{\ell-2}b_j(2^j-1)s^j   \rangle\subseteq\langle s^r\rangle$$ which finishes the proof. 
\end{proof}

\begin{lemma}
		Let $F: (\mathbb{C}^2,0)\to Sym_2$ be a $\cG$-simple germ of rank $0$ at the origin whose discriminant of type $\cD_{k+2}$, $k\geq 2$. Let $\tilde{F}$ be a deformation induced by $\theta\in\frac{Sym_2}{T\cG_{e} F}$. Then the canonical vector field associated to $\tilde{F}$ is Lipschitz if and only of $\theta$ can be written on the form $\theta=\left(	\begin{matrix}
		a   &  b   \\
		b    &  \sum\limits_{i=0}^{k-2}d_ix^i  \\
		\end{matrix}\right)$, with $a,b,d_i\in\bC$.
\end{lemma}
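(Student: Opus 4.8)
The plan is to apply Corollary \ref{2.2}, so the problem reduces to deciding exactly when $I_D(\theta)\subseteq\overline{I_D(\tilde F)}$, and the proof naturally splits into a ``necessity'' direction (constructing test arcs that force $\theta$ into the claimed shape) and a ``sufficiency'' direction (verifying the inclusion for $\theta$ of that shape via the analytic inequality of Theorem \ref{GR}). First I would write down the normal form $F=\begin{pmatrix} x & 0\\ 0 & y^2+x^k\end{pmatrix}$ and compute its extended tangent space $T\cG_e F$, so as to obtain an explicit $\bC$-basis of $\frac{Sym_2}{T\cG_e F}$; from the listed discriminant $D_{k+2}$ and the analogous computation in the preceding lemma one expects the normal space to be spanned by the constant matrices $\begin{pmatrix}1&0\\0&0\end{pmatrix}$, $\begin{pmatrix}0&1\\1&0\end{pmatrix}$ together with $\begin{pmatrix}0&0\\0&x^i\end{pmatrix}$ for $0\le i\le k-2$. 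Thus a general $\theta$ in the normal space already has the form claimed in the statement, and the content of the lemma is really: \emph{every} deformation direction in the normal space gives a Lipschitz canonical vector field (the ``if and only if'' is then a matter of also checking that a representative not lying in the normal space — e.g.\ with a nonzero $x^{k-1}$ entry in the $(2,2)$ slot, or any $y$-dependence — fails).

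Next, for the sufficiency direction I would fix $\theta=\begin{pmatrix} a & b\\ b & \sum_{i=0}^{k-2} d_i x^i\end{pmatrix}$ and write out $I_D(\tilde F)$ and $I_D(\theta)$ explicitly. With $\tilde F=(y,\,F+t\theta)$ we have $I_D(\tilde F)$ generated (over $\cA_{2N}$, in coordinates $x,y,x',y',t$) by $x-x'+t a(\text{const diff}=0)$ — more precisely by $(x-x')+ta(\cdot)$, $b t$-type terms, and $(y^2-y'^2)+(x^k-x'^k)+t\sum d_i(x^i-x'^i)$; the key point is that $x-x'$ and the off-diagonal doubled entry are essentially among the generators up to the $t$-perturbation. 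Meanwhile $I_D(\theta)$ is generated by $t$-independent polynomials in $(x^i-x'^i)$, i.e.\ by $\sum_{i=1}^{k-2} d_i(x^i-x'^i)$ (the $a,b$ entries being constant contribute $0$ to the double). Since $x-x'$ divides each $x^i-x'^i$ and $x-x'\in\overline{I_D(\tilde F)}$ — this needs a short argument handling the $t\theta$ correction, for which one uses that the $(1,1)$ generator is $x-x'$ plus higher-order terms, so Nakayama/Hadamard-type reasoning recovers $x-x'$ up to the ideal — the inclusion $I_D(\theta)\subseteq\overline{I_D(\tilde F)}$ follows, either by the arc criterion or directly from the inequality $\|g(z)\|\le C\max_i\|f_i(z)\|$ of Theorem \ref{GR} with $g$ each generator of $I_D(\theta)$.

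For the necessity direction I would argue by contraposition: assuming the canonical vector field is Lipschitz, Corollary \ref{2.2}(a) gives $I_D(\theta)\subseteq\overline{I_D(\tilde F)}$, and I would plug in well-chosen real-analytic arcs $\phi(s)$ into both ideals — as is done in the $\cA_{k+\ell-1}$ lemma with the arc $\phi(s)=(s^{k+\ell},2s^{k+\ell},2s,s^{k+\ell},s^{k+\ell},s)$ — to extract numerical constraints on the coefficients of $\theta$. Concretely, a representative of $\theta$ in $Sym_2$ that is \emph{not} of the stated form must have either a $y$-dependent entry or an $x^{k-1}$ (or higher) term in the $(2,2)$ slot; choosing an arc along which $\tilde F$-generators vanish to order strictly higher than the corresponding $\theta$-generator then violates $\phi^*(I_D(\theta))\subseteq\phi^*(\overline{I_D(\tilde F)})\subseteq\langle s^{\nu}\rangle$. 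The main obstacle, and where most of the work lies, is organizing the arc search so that it simultaneously (i) kills the $(y^2-y'^2)+(x^k-x'^k)$ generator to high order and (ii) keeps $x-x'$ controlled, while leaving the offending monomial of $\theta$ surviving at a lower order; the $D_{k+2}$ geometry (the ``$y^2$'' branch) should make $y\mapsto s$, $y'\mapsto -s$ or $y'\mapsto\lambda s$ the natural move, paralleling but complicating the $\cA$-case computation. Once the admissible shape of $\theta$ is pinned down and matched against the normal-space basis, the two directions combine to give the stated equivalence.
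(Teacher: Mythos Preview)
Your computation of the normal space $\frac{Sym_2}{T\cG_e F}$ is incomplete: you omit the element $\left(\begin{smallmatrix}0&y\\y&0\end{smallmatrix}\right)$. The correct basis (as in the paper) is
\[
\left\{\begin{pmatrix}1&0\\0&0\end{pmatrix},\begin{pmatrix}0&1\\1&0\end{pmatrix},\begin{pmatrix}0&y\\y&0\end{pmatrix},\begin{pmatrix}0&0\\0&1\end{pmatrix},\begin{pmatrix}0&0\\0&x\end{pmatrix},\ldots,\begin{pmatrix}0&0\\0&x^{k-2}\end{pmatrix}\right\},
\]
so a general $\theta$ in the normal space has the form $\theta=\left(\begin{smallmatrix}a&b+cy\\b+cy&\sum_{i=0}^{k-2}d_ix^i\end{smallmatrix}\right)$ with an extra coefficient $c$. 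Your claim that ``a general $\theta$ in the normal space already has the form claimed in the statement'' is therefore false, and the necessity direction is \emph{not} vacuous: one must show that the Lipschitz condition forces $c=0$. The paper does this with the single test curve $\phi(s)=(s,2s^2,2s,s,s^2,s)$: one checks $\phi^*(I_D(\tilde F))\subseteq\langle s^2\rangle$, while $\phi^*\bigl(c(y-y')\bigr)=cs$, so $c(y-y')\in\overline{I_D(\tilde F)}$ forces $c=0$. Your proposed arc strategy, together with your belief that the offending term must be ``an $x^{k-1}$ (or higher) term in the $(2,2)$ slot, or any $y$-dependence'', is aimed at the wrong target; any $x^j-x'^j$ in the $(2,2)$ slot lies in $\langle x-x'\rangle\subseteq I_D(\tilde F)$ and is harmless. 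The only obstruction inside the normal space is the $cy$ off-diagonal term.

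For sufficiency your idea is right but overcomplicated. Once $c=0$, the $(1,1)$ entry of $\theta$ is the constant $a$, so the doubled $(1,1)$ entry of $\tilde F$ is \emph{exactly} $x-x'$ with no $t$-correction; hence $x-x'\in I_D(\tilde F)$ on the nose, and $I_D(\theta)=\bigl\langle\sum_{i=1}^{k-2}d_i(x^i-x'^i)\bigr\rangle\subseteq\langle x-x'\rangle\subseteq I_D(\tilde F)$. No Nakayama/Hadamard argument or passage to the integral closure is needed.
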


\begin{proof}
	 The normal form of $F$ is $\left(
	\begin{matrix}
	x   &  0   \\
	0     & y^2+x^k  \\
	\end{matrix}\right)$.
	
	
	
	Then, the normal space $\frac{Sym_2}{T\cG_{e}F}$ is generated by 
	
	$$\left\{\left(	\begin{matrix}
	1   &  0   \\
	0     & 0  \\
	\end{matrix}\right),\left(	\begin{matrix}
	0   &  1   \\
	1     & 0  \\
	\end{matrix}\right),\left(	\begin{matrix}
	0   &  y   \\
	y     & 0  \\
	\end{matrix}\right),\left(	\begin{matrix}
	0   &  0   \\
	0     & 1  \\
	\end{matrix}\right),\left(	\begin{matrix}
	0  &  0   \\
	0     & x  \\
	\end{matrix}\right),...,\left(	\begin{matrix}
	0   &  0   \\
	0     & x^{k-2}  \\
	\end{matrix}\right)\right\}.$$
	
	Thus, we can write $\theta=\left(	\begin{matrix}
	a   &  b+cy   \\
	b+cy    &  \sum\limits_{i=0}^{k-2}d_ix^i  \\
	\end{matrix}\right)$, with $a,b,c,d_i\in\bC$,
	
	$I_D(\theta)=\langle c(y-y'),\sum\limits_{i=1}^{k-2}d_i(x^i-x'^i)  \rangle$ and 	\\$I_D(\tilde{F})=\langle x-x', tc(y-y'),y^2-y'^2+x^k-x'^k+t\sum\limits_{i=1}^{k-2}(x^i-x'^i)  \rangle$.
	
	Consider the curve $\phi(s)=(s,2s^2,2s,s,s^2,s)$. Notice that $\phi^*(I_D(\tilde{F}))\\=\langle s^2,cs^2,3s^2+(2^k-1)s^{2k}+s\sum\limits_{i=1}^{k-2}d_i(2^i-1)s^i \rangle\subseteq \langle s^2 \rangle $.
	
	Suppose the canonical vector field is Lipschitz, i.e, $I_D(\theta)\subseteq\overline{I_D(\tilde{F})}$. Then, $cs=\phi^*(c(y-y'))\in\langle s^2\rangle$ and so $c=0$.
	
	Conversely, if $c=0$ then $I_D(\theta)=\langle \sum\limits_{i=1}^{k-2}d_i(x^i-x'^i)  \rangle\subseteq \langle x-x'\rangle\subseteq I_D(\tilde{F})$.
	\end{proof}

\begin{lemma}
	Let $F: (\mathbb{C}^2,0)\to Sym_2$ be a $\cG$-simple germ of rank $0$ at the origin whose discriminant of type $\cD_{2k}$, $k\geq 2$. Let $\tilde{F}$ be a deformation induced by $\theta\in\frac{Sym_2}{T\cG_{e} F}$. Then the canonical vector field associated to $\tilde{F}$ is Lipschitz if and only of $\theta$ is constant.
\end{lemma}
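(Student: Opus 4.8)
The plan is to follow the same template used in the previous two lemmas: compute the normal space, write down the general $\theta$, form the ideals $I_D(\tilde F)$ and $I_D(\theta)$, and then test the inclusion $I_D(\theta)\subseteq\overline{I_D(\tilde F)}$ using Theorem \ref{T2.4} together with the path characterisation of real integral closure.

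First I would record the normal form, which (from the table, discriminant $D_{2k}$) is $F=\left(\begin{matrix} x & 0\\ 0 & xy+y^k\end{matrix}\right)$, and compute the tangent space $T\cG_e F$ so as to identify $\frac{Sym_2}{T\cG_e F}$. I expect the normal space to be spanned by a constant symmetric part together with a handful of monomials $y^i$ or $x^i$ appearing in the off-diagonal and the $(2,2)$ slots; so the generic $\theta$ will be a constant matrix plus a polynomial tail. Writing $\tilde F = F + t\theta$, the double ideal $I_D(\tilde F)$ is generated by $x-x'$, by $t$ times the double of the constant-free part, and by the double of $xy+y^k$ perturbed by $t$ times the double of the $(2,2)$ tail of $\theta$. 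The double ideal $I_D(\theta)$ is generated by the doubles of the non-constant entries of $\theta$.

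Then I would construct a test curve $\phi(s)=(x(s),y(s),t(s),x'(s),y'(s),t'(s))$ of the sort used in the earlier lemmas — something like $\phi(s)=(s,\,\alpha s,\,s,\,s,\,\beta s,\,s)$ with suitably chosen constants, or with a higher power in the $x$-slots — so that $\phi^*(I_D(\tilde F))$ collapses into a single low-order ideal $\langle s^m\rangle$ while $\phi^*$ of some non-constant generator of $I_D(\theta)$ has strictly lower order. Comparing orders then forces every non-constant coefficient of $\theta$ to vanish, giving the "only if" direction. The converse "if" direction is the easy one: when $\theta$ is constant, $I_D(\theta)=0\subseteq\overline{I_D(\tilde F)}$ trivially, so the canonical vector field is Lipschitz by Theorem \ref{T2.4} (indeed this is exactly Corollary \ref{2.2}(b)).

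The main obstacle I anticipate is the choice of the test curve: because the $(2,2)$ entry $xy+y^k$ mixes $x$ and $y$ (unlike the pure powers in the $\cA$ case), a naive curve may make several generators vanish to the same order and fail to separate them, so some care is needed to pick the scaling exponents on the $x$- and $y$-slots (and to keep $x\ne x'$, $y\ne y'$ so that the doubles are genuinely nonzero) so that $\phi^*(I_D(\tilde F))$ is controlled by $\langle s^m\rangle$ with $m$ large enough to kill the tails but the $\phi^*$-image of each monomial double in $I_D(\theta)$ has order $<m$. Once a single such curve works for the leading tail term, an induction (or a second curve) handles the remaining tail coefficients, exactly as in the proof of the $\cA_{k+\ell-1}$ lemma.
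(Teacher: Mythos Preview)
Your plan is correct and matches the paper's proof essentially step for step; the only refinements are that the normal space actually has $y$-monomials in \emph{both diagonal} slots (so $\theta=\left(\begin{smallmatrix}\sum_{i=0}^{k-2}a_iy^i & a\\ a & \sum_{j=0}^{k-1}b_jy^j\end{smallmatrix}\right)$), and that a single test curve $\phi(s)=(s^k,2s^k,2s,s^k,s^k,s)$ in $(t,x,y,t',x',y')$-coordinates already gives $\phi^*(I_D(\tilde F))\subseteq\langle s^k\rangle$ while the pullbacks of the tail generators of $I_D(\theta)$ have order $<k$, so no induction or second curve is needed. The converse is exactly Corollary~\ref{2.2}(b), as you say.
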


\begin{proof}
	The normal form of $F$ is $\left(
	\begin{matrix}
	x   &  0   \\
	0     & xy+y^k  \\
	\end{matrix}\right)$.
	
	
	
	Then, the normal space $\frac{Sym_2}{T\cG_{e}F}$ is generated by 
	
	$$\left\{\left(	\begin{matrix}
	1   &  0   \\
	0     & 0  \\
	\end{matrix}\right),\left(	\begin{matrix}
	0   &  0   \\
	0     & 1  \\
	\end{matrix}\right)\left(	\begin{matrix}
	0   &  1   \\
	1     & 0  \\
	\end{matrix}\right),\left(	\begin{matrix}
	y   &  0   \\
	0     & 0  \\
	\end{matrix}\right),...,\left(	\begin{matrix}
	y^{k-2}   &  0   \\
	0     & 0  \\
	\end{matrix}\right),\left(	\begin{matrix}
	0  &  0   \\
	0     & y  \\
	\end{matrix}\right),...,\left(	\begin{matrix}
	0   &  0   \\
	0     & y^{k-1}  \\
	\end{matrix}\right)\right\}.$$
	
	So we can write  $\theta=\left(	\begin{matrix}
	\sum\limits_{i=0}^{k-2}a_iy^i   &  a   \\
	a     & \sum\limits_{j=0}^{k-1}b_jy^j  \\
	\end{matrix}\right)$, for some $a,a_i,b_j\in\bC$, 	$I_D(\theta)=\langle \sum\limits_{i=1}^{k-2}a_i(y^i-y'^i),\sum\limits_{j=1}^{k-1}b_j(y^j-y'^j)  \rangle$ and 	\\$I_D(\tilde{F})=\langle x-x'+t\sum\limits_{i=1}^{k-2}a_i(y^i-y'^i) ,xy-x'y'+y^k-y'^k+t\sum\limits_{j=1}^{k-1}b_j(y^j-y'^j)  \rangle$.
	
	Consider the curve $\phi(s)=(s^k,2s^k,2s,s^k,s^k,s)$. Then $\phi^*(I_D(\tilde{F}))\\=\langle s^k+s^k\sum\limits_{i=1}^{k-2}a_i(2^i-1)s^i,3s^{k+1}+(2^k-1)s^k+s^k\sum\limits_{j=1}^{k-1}b_j(2^j-1)s^j  \rangle\subseteq\langle s^k\rangle$.
	
	If the canonical vector field is Lipschitz then $\sum\limits_{i=1}^{k-2}a_i(2^i-1)s^i$ and $\sum\limits_{j=1}^{k-1}b_j(2^j-1)s^j$ belong to $\langle s^k\rangle$. Hence, $a_i=0$ and $b_j=0$ for all $i$ and $j$. Therefore, $\theta$ is constant.
	
\end{proof}

\begin{lemma}
	Let $F: (\mathbb{C}^2,0)\to Sym_2$ be a $\cG$-simple germ of rank $0$ at the origin whose discriminant of type $\cD_{2k+1}$, $k\geq 2$. Let $\tilde{F}$ be a deformation induced by $\theta\in\frac{Sym_2}{T\cG_{e} F}$. Then the canonical vector field associated to $\tilde{F}$ is Lipschitz if and only of $\frac{\partial\tilde{F}}{\partial y}=\frac{\partial F}{\partial y}$, i.e, $\theta$ can be written on the form $\theta=\left(	\begin{matrix}
	a   &  b   \\
	b    &  \sum\limits_{j=0}^{k-1}b_jx^j  \\
	\end{matrix}\right)$, with $a,b,b_j\in\bC$.
\end{lemma}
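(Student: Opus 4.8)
The plan is to run the three–step scheme of the preceding lemmas. First I would compute the normal space; for $F=\left(\begin{matrix}x & y^k\\ y^k & xy\end{matrix}\right)$ one checks that $\frac{Sym_2}{T\cG_{e}F}$ is generated by
$$\left\{\left(\begin{matrix}1&0\\0&0\end{matrix}\right),\left(\begin{matrix}0&1\\1&0\end{matrix}\right),\left(\begin{matrix}0&y\\y&0\end{matrix}\right),\dots,\left(\begin{matrix}0&y^{k-1}\\y^{k-1}&0\end{matrix}\right),\left(\begin{matrix}0&0\\0&1\end{matrix}\right),\left(\begin{matrix}0&0\\0&x\end{matrix}\right),\dots,\left(\begin{matrix}0&0\\0&x^{k-1}\end{matrix}\right)\right\},$$
so a general $\theta\in\frac{Sym_2}{T\cG_{e}F}$ is $\theta=\left(\begin{matrix}a & b+\sum_{i=1}^{k-1}c_iy^i\\ b+\sum_{i=1}^{k-1}c_iy^i & \sum_{j=0}^{k-1}b_jx^j\end{matrix}\right)$ with $a,b,c_i,b_j\in\bC$. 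Since $\frac{\partial\tilde F}{\partial y}=\frac{\partial F}{\partial y}+t\frac{\partial\theta}{\partial y}$, the equality $\frac{\partial\tilde F}{\partial y}=\frac{\partial F}{\partial y}$ is exactly the condition $c_1=\dots=c_{k-1}=0$, i.e.\ the asserted shape of $\theta$, so it suffices to prove that the canonical vector field is Lipschitz if and only if all $c_i$ vanish. Passing to doubles (the constants $a$, $b$, $b_0$ cancel) one gets $I_D(\tilde F)=\langle\, x-x',\ (y^k-y'^k)+t\sum_{i=1}^{k-1}c_i(y^i-y'^i),\ (xy-x'y')+t\sum_{j=1}^{k-1}b_j(x^j-x'^j)\,\rangle$ and $I_D(\theta)=\langle\,\sum_{i=1}^{k-1}c_i(y^i-y'^i),\ \sum_{j=1}^{k-1}b_j(x^j-x'^j)\,\rangle$, and by Corollary \ref{2.2} (in the complex version of \cite{SGP}) the vector field is Lipschitz exactly when $I_D(\theta)\subseteq\overline{I_D(\tilde F)}$.

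If all $c_i=0$ then $\sum_{j=1}^{k-1}b_j(x^j-x'^j)\in\langle x-x'\rangle\subseteq I_D(\tilde F)$, so $I_D(\theta)\subseteq\overline{I_D(\tilde F)}$ and the vector field is Lipschitz. For the converse, assuming $I_D(\theta)\subseteq\overline{I_D(\tilde F)}$, I would test this inclusion on the analytic curve $\phi(s)=(s^k,2s^k,2s,s^k,s^k,s)$ in the coordinates $(t,x,y,t',x',y')$. Along $\phi$ the three generators of $I_D(\tilde F)$ become $s^k$, $\ (2^k-1)s^k+\sum_{i=1}^{k-1}c_i(2^i-1)s^{k+i}$, and $3s^{k+1}+\sum_{j=1}^{k-1}b_j(2^j-1)s^{k+kj}$, all of order $\ge k$, so $\phi^*(I_D(\tilde F))\subseteq\langle s^k\rangle$; whereas the first generator of $I_D(\theta)$ becomes $\sum_{i=1}^{k-1}c_i(2^i-1)s^i$. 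By the definition of the integral closure applied to $\phi$ (equivalently, by Theorem \ref{GR}) this element must lie in $\langle s^k\rangle$, and since $2^i-1\neq0$ this forces $c_i=0$ for all $1\le i\le k-1$. Shrinking to the neighbourhood supplied by Theorem \ref{T2.4}/Corollary \ref{2.2} completes the argument.

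The genuinely delicate point — exactly as in the lemmas for the discriminants $\cA_{k+\ell-1}$ and $\cD_{2k}$ — is the design of the test curve: one wants the three entries $x$, $y^k$, $xy$ of $F$ to acquire order $\ge k$ along $\phi$ while the parasitic powers $y,\dots,y^{k-1}$ keep order $<k$, and, simultaneously, the parameter contribution $t\sum c_i(y^i-y'^i)$ in the middle generator to be negligible (order $>k$) so that its leading coefficient $2^k-1$ is a genuine unit irrespective of the unknown $c_i$. The choice $y=2s$, $y'=s$, $x=2s^k$, $x'=s^k$, $t=t'=s^k$ achieves all of this at once, after which the proof reduces to the order bookkeeping indicated above.
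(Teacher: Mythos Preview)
Your argument follows the paper's exactly: the same test curve $\phi(s)=(s^k,2s^k,2s,s^k,s^k,s)$, the same order bookkeeping to force the $y$-dependent coefficients to zero, and the same observation that once they vanish one has $I_D(\theta)\subseteq\langle x-x'\rangle\subseteq I_D(\tilde F)$.

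The one substantive discrepancy is your basis for $\dfrac{Sym_2}{T\cG_e F}$: you place the unfolding monomials $y,\dots,y^{k-1}$ on the off-diagonal, whereas the paper places them in the $(1,1)$ entry. These are not interchangeable choices of representatives, and in fact your list is linearly \emph{dependent} in the quotient: since $F_y=\left(\begin{smallmatrix}0&ky^{k-1}\\ky^{k-1}&x\end{smallmatrix}\right)\in T\cG_e F$, one has
\[
\begin{pmatrix}0&y^{k-1}\\y^{k-1}&0\end{pmatrix}\;\equiv\;-\tfrac{1}{k}\begin{pmatrix}0&0\\0&x\end{pmatrix}\pmod{T\cG_e F},
\]
so two of your $2k+1$ listed elements coincide up to scalar in the quotient. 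Consequently your generic $\theta$ does not sweep out the full $(2k{+}1)$-dimensional normal space, and the ``only if'' direction is not established for all deformations induced by $\theta\in\dfrac{Sym_2}{T\cG_e F}$, even though every individual step you wrote is internally correct. The fix is purely computational: recover the correct basis (as in the paper, or directly from Bruce's tables) and rerun the identical curve-test argument; the choice of $\phi$ and the ideal-inclusion logic go through unchanged.
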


\begin{proof}
	The normal form of $F$ is $\left(
	\begin{matrix}
	x   &  y^k   \\
	y^k     & xy  \\
	\end{matrix}\right)$.
	
	
	
	Then, the normal space $\frac{Sym_2}{T\cG_{e}F}$ is generated by 
	
	$$\left\{\left(	\begin{matrix}
	1   &  0   \\
	0     & 0  \\
	\end{matrix}\right),\left(	\begin{matrix}
	0   &  1   \\
	1     & 0  \\
	\end{matrix}\right),\left(	\begin{matrix}
	0   &  0   \\
	0     & 1  \\
	\end{matrix}\right),\left(	\begin{matrix}
	y   &  0   \\
	0     & 0  \\
	\end{matrix}\right),...,\left(	\begin{matrix}
	y^{k-1}   &  0   \\
	0     & 0  \\
	\end{matrix}\right),\left(	\begin{matrix}
	0  &  0   \\
	0     & x  \\
	\end{matrix}\right),...,\left(	\begin{matrix}
	0   &  0   \\
	0     & x^{k-1}  \\
	\end{matrix}\right)\right\}.$$
	
	Thus, we can write $\theta=\left(	\begin{matrix}
	a+\sum\limits_{i=1}^{k-1}a_iy^i   &  b   \\
	b    &  \sum\limits_{j=0}^{k-1}b_jx^j  \\
	\end{matrix}\right)$, \\with $a,a_i,b,b_j\in\bC$,
	
	$I_D(\theta)=\langle \sum\limits_{i=1}^{k-1}a_i(y^i-y'^i),\sum\limits_{j=1}^{k-1}b_j(x^j-x'^j)  \rangle$ and 	\\$I_D(\tilde{F})=\langle x-x'+t\sum\limits_{i=1}^{k-1}a_i(y^i-y'^i), y^k-y'^k,xy-x'y'+t\sum\limits_{j=1}^{k-1}b_j(x^j-x'^j)  \rangle$.
	
	Consider the curve $\phi(s)=(s^k,2s^k,2s,s^k,s^k,s)$. Then $\phi^*(I_D(\tilde{F}))=\langle s^k+s^k\sum\limits_{i=1}^{k-1}a_i(2^i-1)s^i, (2^k-1)s^k,3s^{k+1}+s^k\sum\limits_{j=1}^{k-1}b_j(2^j-1)s^{kj}  \rangle\subseteq\langle s^k\rangle$.
	
	If $I_D(\theta)\subseteq\overline{I_D(\tilde{F})}$ then $\sum\limits_{i=1}^{k-1}a_i(2^i-1)s^i\in\langle s^k\rangle$, hence $a_i=0,\forall i\in\{1,...,k-1\}$. Conversely, if $a_i=0,\forall i\in\{1,...,k-1\}$ then $I_D(\theta)=\langle \sum\limits_{j=1}^{k-1}b_j(x^j-x'^j)  \rangle\subseteq\langle x-x'\rangle\subseteq I_D(\tilde{F})$. 	
\end{proof}

\begin{lemma}
	Let $F: (\mathbb{C}^2,0)\to Sym_2$ be a $\cG$-simple germ of rank $0$ at the origin with discriminant of type $E_6$. Then the canonical vector field associated to the $1$-parameter deformation $\tilde{F}$ induced by $\theta\in\frac{Sym_2}{T\cG_{e}F}$ is Lipschitz if and only if the $1$-jet type of $\tilde{F}$ and $F$ agree.  
\end{lemma}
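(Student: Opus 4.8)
The plan is to mimic the structure of the four preceding lemmas. The normal form here is $F = \left(\begin{matrix} x & y^2 \\ y^2 & x^2 \end{matrix}\right)$, with $E_6$ discriminant, so first I would compute the extended tangent space $T\cG_e F$ and hence a $\bC$-basis of the normal space $\frac{Sym_2}{T\cG_e F}$. From Bruce's classification the quotient should be spanned by constant symmetric matrices together with a small number of monomial perturbations; I expect something like $\left\{\left(\begin{smallmatrix}1&0\\0&0\end{smallmatrix}\right), \left(\begin{smallmatrix}0&1\\1&0\end{smallmatrix}\right), \left(\begin{smallmatrix}0&0\\0&1\end{smallmatrix}\right), \left(\begin{smallmatrix}0&y\\y&0\end{smallmatrix}\right), \left(\begin{smallmatrix}0&0\\0&x\end{smallmatrix}\right)\right\}$ (the exact list must be checked against $\mu(E_6)=6$), so a general $\theta$ reads $\theta = \left(\begin{matrix} a & b+cy \\ b+cy & d+ex \end{matrix}\right)$. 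The assertion ``the $1$-jet types of $\tilde F$ and $F$ agree'' then amounts to $c=e=0$, i.e. the perturbation direction $\theta$ is constant; I would state the lemma's conclusion in exactly that form once the normal space is pinned down.

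Next I would write down the two doubled ideals. With $\tilde F = F + t\theta$, one has $I_D(\tilde F)$ generated by $x-x' + t(b-b') + tc(y-y')$ — wait, more carefully, by $(x-x') + tc(y-y')$, by $(y^2-y'^2) + tc(y-y')$ in the off-diagonal slot after subtracting the constant $b$ part (constants cancel in the double), and by $(x^2 - x'^2) + te(x-x')$ — while $I_D(\theta)$ is generated by $c(y-y')$ and $e(x-x')$. The ``if'' direction is the easy half: if $c=e=0$ then $I_D(\theta)=0 \subseteq \overline{I_D(\tilde F)}$ trivially, so the canonical vector field is Lipschitz by Theorem \ref{T2.4} (equivalently Corollary \ref{2.2}(b), since $\theta$ constant). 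For the ``only if'' direction I would, exactly as in the $\cD$-cases, exhibit an explicit analytic curve $\phi(s)$ in $(\bR\times\bR^2)\times(\bR\times\bR^2)$ (taking $t$ as the last-but-one coordinate, or rather parametrizing $z=(x,y)$, $z'=(x',y')$ and the parameter) along which $\phi^*(I_D(\tilde F))$ lies in a high power $\langle s^m\rangle$ but $\phi^*(c(y-y'))$ or $\phi^*(e(x-x'))$ lies in a strictly lower power, forcing $c=0$ and then $e=0$. Following the template, a curve of the shape $\phi(s) = (s^2, 2s^2, 2s, s^2, s^2, s)$ or similar — chosen so that $x-x'$ and $x^2-x'^2$ pull back to order $\geq 2$ while $y-y'$ pulls back to order exactly $1$ — should isolate the coefficient $c$; a second, symmetric curve isolates $e$.

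The main obstacle I anticipate is precisely the choice of the separating curve(s): because the three generators of $I_D(\tilde F)$ mix $x-x'$, $y-y'$ with the squares and with the parameter $t$, I need the curve to kill all of $x-x'$, $y^2-y'^2+tc(y-y')$, $x^2-x'^2+te(x-x')$ up to order $m$ while keeping $y-y'$ (or $x-x'$) at order $<m$; the presence of $y^2-y'^2$ next to $y-y'$ is the delicate point, since naively a curve making $y-y'$ small also makes $y^2-y'^2$ small, so I must use the $t$-coordinate and the difference structure (e.g. $y\mapsto$ something like $y(s)$, $y'(s)$ with $y(s)+y'(s)$ of high order but $y(s)-y'(s)$ of low order) to decouple them — this is the same trick already used in the $\cA$ and $\cD$ lemmas with the factor $(2^i-1)$. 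Once the right curve is found the rest is a routine order-of-vanishing bookkeeping, and the ``only if'' conclusion $c=e=0$, i.e. agreement of $1$-jet types, drops out; combined with the ``if'' direction this proves the lemma. I would close by remarking, as in the $\cD_{2k}$ and $E_7$ cases, that here ``constant $\theta$'' and ``$1$-jets agree'' are literally the same condition because the normal space contains no nonconstant generator below degree~$2$ other than the ones being ruled out.
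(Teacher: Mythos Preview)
Your overall template (compute the normal space, write $\tilde F=F+t\theta$, compare $I_D(\theta)$ with $\overline{I_D(\tilde F)}$, and separate via an explicit curve) is exactly the paper's method, but the specific content is wrong in a way that breaks both directions of the argument.

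The normal space is not the one you guessed. For $F=\left(\begin{smallmatrix}x & y^2\\ y^2 & x^2\end{smallmatrix}\right)$ the quotient $\frac{Sym_2}{T\cG_e F}$ is six--dimensional and is spanned by the three constant symmetric matrices together with $\left(\begin{smallmatrix}y&0\\0&0\end{smallmatrix}\right)$, $\left(\begin{smallmatrix}0&0\\0&y\end{smallmatrix}\right)$ and $\left(\begin{smallmatrix}0&0\\0&y^2\end{smallmatrix}\right)$; there is no off-diagonal $y$ and no $(2,2)$-entry $x$. Hence a general $\theta$ has the shape
\[
\theta=\begin{pmatrix} a_1+a_3y & 0\\ 0 & a_2+a_5y+a_6y^2\end{pmatrix}
\]
(up to the harmless constant off-diagonal term). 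The condition ``$1$-jet types of $\tilde F$ and $F$ agree'' is $a_3=a_5=0$, which does \emph{not} force $\theta$ to be constant: the degree-$2$ direction $a_6y^2$ survives. Your closing remark that ``constant $\theta$'' and ``$1$-jets agree'' coincide here is therefore false; this is precisely what distinguishes the $E_6$ case from the $\cD_{2k}$ and $E_7$ cases.

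This error propagates to the ``if'' direction, which is not trivial. When $a_3=a_5=0$ one has $I_D(\theta)=\langle a_6(y^2-y'^2)\rangle\neq 0$, and the reason it lies in $I_D(\tilde F)$ is the specific feature of the $E_6$ normal form: the off-diagonal entry of $F$ is $y^2$, so $y^2-y'^2$ is already a generator of $I_D(\tilde F)$. That containment is the real content of the forward implication; Corollary~\ref{2.2}(b) does not apply. For the ``only if'' direction your curve idea is right in spirit; the paper uses $\phi(s)=(s,2s^3,2s^2,s,s^3,s^2)$, for which $\phi^*(I_D(\tilde F))\subset\langle s^3\rangle$ while $\phi^*(a_3(y-y'))=a_3s^2$, forcing $a_3=0$ (and the same curve handles $a_5$). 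Once you correct the normal space, the bookkeeping is indeed routine.
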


\begin{proof}
The normal form of $F$ is $\left(	\begin{matrix}
x   &  y^2   \\
y^2     & x^2  \\
\end{matrix}\right)$. 


Then, the normal space $\frac{Sym_2}{T\cG_{e}F}$ is generated by 

$$\left\{\left(	\begin{matrix}
1   &  0   \\
0     & 0  \\
\end{matrix}\right),\left(	\begin{matrix}
0   &  0   \\
0     & 1  \\
\end{matrix}\right),\left(	\begin{matrix}
0   &  1   \\
1     & 0  \\
\end{matrix}\right),\left(	\begin{matrix}
y   &  0   \\
0     & 0  \\
\end{matrix}\right),\left(	\begin{matrix}
0  &  0   \\
0     & y  \\
\end{matrix}\right),\left(	\begin{matrix}
0   &  0   \\
0     & y^2  \\
\end{matrix}\right)\right\}.$$

If $\theta\in\frac{Sym_2}{T\cG_{e}F}$ induces a non-trivial deformation $\tilde{F}$ then we can write $$\theta(x,y)=\left(	\begin{matrix}
a_1+a_3y+a_4y^2   &  0   \\
0     & a_2+a_5y+a_6y^2  \\
\end{matrix}\right).$$

Thus $\tilde{F}=\left(	\begin{matrix}
x+t(a_1+a_3y+a_4y^2)   &  y^2   \\
y^2     & x^2+t(a_2+a_5y+a_6y^2)  \\
\end{matrix}\right)$.

Notice that 

$I_D(\theta)=\langle a_3(y-y')+a_4(y^2-y'^2),a_5(y-y')+a_6(y^2-y'^2) \rangle$.

Suppose the $1$-jet type of $\tilde{F}$ and $F$ agree. Then $a_3=a_5=0$ and in this case $I_D(\theta)=\langle a_4(y^2-y'^2),a_6(y^2-y'^2) \rangle$. Since $y^2-y'^2\in I_D(\tilde{F})$ then $I_D(\theta)\subseteq I_D(\tilde{F})$ and the canonical vector field is Lipschitz.

Conversely, if the canonical vector field is Lipschitz then $a_3=a_5=0$. In fact, we are assuming that $I_D(\theta)\subseteq \overline{I_D(\tilde{F})}$. 

We have $I_D(\tilde{F})$ is generated by $$\{ y^2-y'^2, x-x'+t(a_3(y-y')+a_4(y^2-y'^2)), x^2-x'^2+t(a_5(y-y')+a_6(y^2-y'^2)) \}.$$

Consider the curve  $\phi(s)=(s,2s^3,2s^2,s,s^3,s^2)$. Then we have that $\phi^*(I_D(\tilde{F}))=\langle 3s^4,s^3+s(a_3s^2+3a_4s^4),3s^6+s(a_5s^2+3a_6s^4)  \rangle\subseteq \langle s^3 \rangle$. Since $\phi^*(I_D(\theta))\subseteq\phi^*(I_D(\tilde{F}))\subseteq\langle s^3\rangle$ then $\phi^*(a_3(y-y')+a_4(y^2-y'^2))\\=a_3s^2+3a_4s^4\in\langle s^3\rangle$ which implies that $a_3s^2\in\langle s^3\rangle$, hence $a_3=0$. Analogously, using the same curve, we prove that $a_5=0$.
\end{proof}

\newpage
	
\begin{lemma}
		
		Let $F: (\mathbb{C}^2,0)\to Sym_2$ be a $\cG$-simple germ of rank $0$ at the origin whose discriminant of type $E_7$. Let $\tilde{F}$ be a deformation induced by $\theta\in\frac{Sym_2}{T\cG_{e} F}$. Then the canonical vector field associated to $\tilde{F}$ is Lipschitz if and only of $\theta$ is constant.
\end{lemma}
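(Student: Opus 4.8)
The plan is to follow the same template used in the previous six lemmas: compute the normal space, write out the general $\theta$, form the ideals $I_D(\tilde F)$ and $I_D(\theta)$, and then test the inclusion $I_D(\theta)\subseteq\overline{I_D(\tilde F)}$ against cleverly chosen real-analytic curves via Theorem~\ref{GR}. For the $E_7$ normal form $F=\left(\begin{matrix} x & 0 \\ 0 & x^2+y^3\end{matrix}\right)$, I first need the generators of $\frac{Sym_2}{T\cG_e F}$; by analogy with the $D_{k+2}$ case these should be $\left(\begin{matrix}1&0\\0&0\end{matrix}\right)$, $\left(\begin{matrix}0&1\\1&0\end{matrix}\right)$, $\left(\begin{matrix}0&y\\y&0\end{matrix}\right)$, $\left(\begin{matrix}0&0\\0&1\end{matrix}\right)$, $\left(\begin{matrix}0&0\\0&y\end{matrix}\right)$, $\left(\begin{matrix}0&0\\0&y^2\end{matrix}\right)$. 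So a general $\theta$ has the form $\theta=\left(\begin{matrix}a & b+cy \\ b+cy & d_0+d_1y+d_2y^2\end{matrix}\right)$, and $\tilde F=\left(\begin{matrix} x+ta & y^2\cdot 0+t(b+cy) \\ t(b+cy) & x^2+y^3+t(d_0+d_1y+d_2y^2)\end{matrix}\right)$ — i.e.\ the off-diagonal entry is $t(b+cy)$.

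The ``constant $\theta$ suffices'' direction is Corollary~\ref{2.2}(b), so only the forward implication requires work: assuming the canonical vector field is Lipschitz I must show $c=0$, $d_1=0$, and $d_2=0$. We have $I_D(\theta)=\langle c(y-y'),\ d_1(y-y')+d_2(y^2-y'^2)\rangle$ and $I_D(\tilde F)=\langle x-x',\ tc(y-y'),\ x^2-x'^2+y^3-y'^3+t(d_1(y-y')+d_2(y^2-y'^2))\rangle$. To kill $c$ and $d_1$ I would use a curve of the shape $\phi(s)=(s,2s^2,2s,\,s,s^2,s)$ (as in the $D_{k+2}$ lemma), under which $x-x'\mapsto s$-free but $x^2-x'^2\mapsto 3s^2$ and $y^3-y'^3\mapsto (2^3-1)s^3=7s^3$, so $\phi^*I_D(\tilde F)\subseteq\langle s^2\rangle$; then $\phi^*(c(y-y'))=cs$ forces $c=0$ and $\phi^*(d_1(y-y')+d_2(y^2-y'^2))=d_1s+3d_2s^2$ forces $d_1=0$. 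To eliminate $d_2$ I need a second curve that separates the $y^2-y'^2$ term from $x-x'$ and from $x^2-x'^2+y^3-y'^3$: something like $\phi(s)=(s^3,2s^3,2s,\,s^3,s^3,s)$ makes $x-x'\mapsto 0$, $x^2-x'^2\mapsto 0$, $y^3-y'^3\mapsto 7s^3$, $y^2-y'^2\mapsto 3s^2$, so $\phi^*I_D(\tilde F)\subseteq\langle s^3\rangle$ while $\phi^*(d_2(y^2-y'^2))=3d_2s^2$, giving $d_2=0$.

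The main obstacle I anticipate is choosing the curves so that the generators of $I_D(\tilde F)$ all land in a high power of $s$ while the target generator of $I_D(\theta)$ lands in a strictly lower power — the $t$-dependent terms in $I_D(\tilde F)$ can conspire to lower the order, so the curve's exponents on the $x,x'$ slots versus the $y,y'$ slots (and the choice of the multiplicative constant $2$ versus $1$) must be calibrated carefully, exactly as in the $D$-type lemmas. One should also double-check that the generators listed for the normal space are correct, i.e.\ that $T\cG_e F$ really does contain everything claimed (in particular that higher powers $y^3,y^4,\dots$ in the bottom-right entry and all $x$-powers are absorbed), since the structure of $\theta$ drives the whole argument; this is a routine but necessary tangent-space computation using the row/column generators $R_{il}, C_{jm}$ recalled in Section~1. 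Finally, I would assemble the two directions into the stated equivalence.
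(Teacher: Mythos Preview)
Your overall template---write down the normal space, form $I_D(\theta)$ and $I_D(\tilde F)$, and test the inclusion against analytic curves---is exactly the paper's method. The problem is that your normal space is wrong, and this error propagates through everything that follows.

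For the $E_7$ normal form $F=\left(\begin{smallmatrix}x&0\\0&x^2+y^3\end{smallmatrix}\right)$ the $\cG_e$-codimension is $7$ (this is the content of the name $E_7$; cf.\ Remark~1.2 of \cite{Bruce}), not $6$. Concretely, the matrix $\left(\begin{smallmatrix}0&0\\0&y^2\end{smallmatrix}\right)$ that you list as a generator of the normal space is actually $\tfrac{1}{3}F_y$, hence already in $T\cG_eF$; on the other hand you have omitted the genuine normal directions $\left(\begin{smallmatrix}y&0\\0&0\end{smallmatrix}\right)$ and $\left(\begin{smallmatrix}0&y^2\\y^2&0\end{smallmatrix}\right)$. (One checks, for instance, that $\left(\begin{smallmatrix}y&0\\0&0\end{smallmatrix}\right)\notin T\cG_eF$ because any element of the tangent space with $(1,1)$-entry equal to $y$ is forced to carry a nonzero $xy$-term in its $(2,2)$-entry, which cannot be cancelled.) The correct general $\theta$ is therefore
\[
\theta=\begin{pmatrix}a_1+a_5y & a_3+a_6y+a_7y^2\\ a_3+a_6y+a_7y^2 & a_2+a_4y\end{pmatrix},
\]
so the coefficients you must kill are $a_4,a_5,a_6,a_7$, not your $c,d_1,d_2$. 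In particular your $I_D(\tilde F)$ is wrong: the $(1,1)$-entry now gives $x-x'+ta_5(y-y')$ rather than $x-x'$, and the off-diagonal entry contributes $t(a_6(y-y')+a_7(y^2-y'^2))$.

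With the correct $\theta$ the paper dispatches all four coefficients with a \emph{single} curve $\phi(s)=(s^2,2s^3,2s,s^2,s^3,s)$: one finds $\phi^*(I_D(\tilde F))\subseteq\langle s^3\rangle$, while $a_5(y-y')$, $a_4(y-y')$ and $a_6(y-y')+a_7(y^2-y'^2)$ pull back to $a_5s$, $a_4s$ and $a_6s+3a_7s^2$ respectively, forcing $a_4=a_5=a_6=a_7=0$. Your first curve $\phi(s)=(s,2s^2,2s,s,s^2,s)$ would also work for $a_4,a_5,a_6$ but only lands $\phi^*(I_D(\tilde F))$ in $\langle s^2\rangle$, so it does not see $a_7$; the paper's choice of a higher power of $s$ in the $t$-slot is what buys the extra order. (Incidentally, your second curve as written does \emph{not} give $x-x'\mapsto 0$: with $x=2s^3,\ x'=s^3$ one gets $x-x'=s^3$. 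You presumably intended $x=x'$.)
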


	\begin{proof}	
			The normal form of $F$ is $\left(
			\begin{matrix}
			x   &  0   \\
			0     & x^2+y^3  \\
			\end{matrix}\right)$.
		
		
		
		Then, the normal space $\frac{Sym_2}{T\cG_{e}F}$ is generated by 
		
		$$\left\{\left(	\begin{matrix}
		1   &  0   \\
		0     & 0  \\
		\end{matrix}\right),\left(	\begin{matrix}
		0   &  0   \\
		0     & 1  \\
		\end{matrix}\right)\left(	\begin{matrix}
		0   &  1   \\
		1     & 0  \\
		\end{matrix}\right),\left(	\begin{matrix}
		0   &  0   \\
		0     & y  \\
		\end{matrix}\right),\left(	\begin{matrix}
		y   &  0   \\
		0     & 0  \\
		\end{matrix}\right),\left(	\begin{matrix}
		0  &  y   \\
		y     & 0  \\
		\end{matrix}\right),\left(	\begin{matrix}
		0   &  y^2   \\
		y^2     & 0  \\
		\end{matrix}\right)\right\}.$$
		
		So we can write  $\theta=\left(	\begin{matrix}
		a_1+a_5y   &  a_3+a_6y+a_7y^2   \\
		a_3+a_6y+a_7y^2     & a_2+a_4y  \\
		\end{matrix}\right)$, for some $a_i\in\bC$, 	$I_D(\theta)=\langle a_5(y-y'),a_4(y-y'),a_6(y-y')+a_7(y^2-y'^2)  \rangle$ and 	\\$I_D(\tilde{F})=\langle x-x'+ta_5(y-y') ,t(a_6(y-y')+a_7(y^2-y'^2)),x^2-x'^2+y^3-y'^3+ta_4(y-y')  \rangle$. Consider the curve $\phi(s)=(s^2,2s^3,2s,s^2,s^3,s)$. Thus, $\phi^*(I_D(\tilde{F}))=\langle s^3+a_5s^3,a_6s^3+3a_7s^4,3s^6+7s^3+a_4s^3\rangle\subseteq\langle s^3 \rangle$.
		
		If the canonical vector field is Lipschitz then $a_5s,a_4s,a_6s+3a_7s^2\in\langle s^3\rangle$ which implies that $a_4=a_5=a_6=a_7=0$. Therefore, $\theta$ is constant.	
	\end{proof}
	

As in \cite{SGP}, the canonical vector field associated to the $1$-parameter deformation $\tilde{F}$ of the normal forms presented in \cite{Bruce} induced by $\theta\in\frac{Sym_3}{T\cG_{e}F}$ is Lipschitz if and only if the $1$-jet type of $\tilde{F}$ and $F$ agree.  The proof of the next result is analogous to the proof of the main result of \cite{SGP}.

\begin{proposition}
For all  $\mathcal{G}$-simple germs $F: \mathbb{C}^{r} \to Sym_{3}$ of rank $0$ at the origin we have that the canonical vector field associated to the $1$-parameter deformation $\tilde{F}$ induced by $\theta\in\frac{Sym_3}{T\cG_{e}F}$ is Lipschitz. 

\end{proposition}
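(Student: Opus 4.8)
The plan is to prove the proposition by reducing, exactly as in the $\text{Sym}_2$ analysis above and in \cite{SGP}, to showing the inclusion $I_D(\theta)\subseteq\overline{I_D(\tilde F)}$ for every $\theta$ representing a class in $\frac{\text{Sym}_3}{T\cG_e F}$, and then verifying that in each of Bruce's normal forms for $\mathcal G$-simple germs $F:(\bC^r,0)\to\text{Sym}_3$ of rank $0$ the generators of $I_D(\theta)$ already lie in $\overline{I_D(\tilde F)}$. By Corollary \ref{2.2}(a), for $\tilde F(y,x)=(y,F(x)+y\theta(x))$ the canonical vector field is Lipschitz if and only if $I_D(\theta)\subseteq\overline{I_D(\tilde F)}$; so the whole proposition is the assertion that this inclusion holds for all such $F$ and all admissible $\theta$.

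The key observation, which makes the claim true \emph{unconditionally} in contrast with the $\text{Sym}_2$ situation, is that for $\mathcal G$-simple rank-$0$ germs into $\text{Sym}_3$ the normal space $\frac{\text{Sym}_3}{T\cG_e F}$ is spanned, for each of Bruce's normal forms, by \emph{constant} symmetric matrices together with symmetric matrices all of whose entries are monomials that are \emph{already contained in the ideal of variables appearing linearly in $F$}. Concretely, I would go through Bruce's list of $\mathcal G$-simple $3\times 3$ symmetric germs and, for each normal form, write down generators of $\frac{\text{Sym}_3}{T\cG_e F}$ (these are computed from the $R_{il}$, $C_{jm}$ description of $T\cG_e F$ recalled in Section 1). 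One then observes that every non-constant generator $E$ has entries in the ideal generated by those coordinate functions $x_i$ with $\frac{\partial}{\partial x_i}$ direction realized in $T\cG_e F$ by a unit times an elementary matrix — precisely the situation that in the $\text{Sym}_2$ Lemmas forced the ``conversely'' direction to go through (e.g.\ $I_D(\theta)=\langle\sum d_i(x^i-x'^i)\rangle\subseteq\langle x-x'\rangle\subseteq I_D(\tilde F)$). Writing $\theta=\theta_0+\theta_1$ with $\theta_0$ constant and $\theta_1$ of the above monomial type, Corollary \ref{2.2}(b) handles $\theta_0$, and for $\theta_1$ one checks directly that each $(\theta_1)_D$-generator is an $\cA_{2N}$-multiple of a generator of $I_D(\tilde F)$, giving $I_D(\theta_1)\subseteq I_D(\tilde F)\subseteq\overline{I_D(\tilde F)}$; adding the two and using that $\overline{I_D(\tilde F)}$ is an ideal finishes it.

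So the steps, in order, are: (1) invoke Corollary \ref{2.2}(a) to restate ``Lipschitz'' as the integral-closure inclusion; (2) for each $\mathcal G$-simple $F:(\bC^r,0)\to\text{Sym}_3$ of rank $0$ from Bruce's classification, record generators of $T\cG_e F$ from the $R_{il},C_{jm}$ formulas and hence a monomial basis of $\frac{\text{Sym}_3}{T\cG_e F}$; (3) decompose an arbitrary representative $\theta$ as constant part plus non-constant part, dispatch the constant part by Corollary \ref{2.2}(b); (4) show the non-constant generators of $I_D(\theta)$ are $\cA_{2N}$-combinations of generators of $I_D(\tilde F)$ — this is where the specific structure of the $\text{Sym}_3$ normal forms is used — and conclude via $I_D(\theta)\subseteq\overline{I_D(\tilde F)}$; (5) observe this covers every case in the classification, proving the proposition. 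The remark in the excerpt already flags that the outcome matches \cite{SGP} (``Lipschitz iff the $1$-jet types agree''), and the content to verify is that for $\text{Sym}_3$ the non-constant normal-space directions never raise the $1$-jet, so the condition is vacuous.

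The main obstacle is step (2)–(4): it is essentially bookkeeping over Bruce's full list of $3\times 3$ simple symmetric singularities, and for each normal form one must correctly compute the normal space and then exhibit, entry by entry, the divisibility of each $(\theta_1)_D$-generator by a generator of $I_D(\tilde F)$. The subtlety is that the $y$-factor in $\tilde F=F+y\theta$ can make a would-be generator of $I_D(\tilde F)$ carry an extra $t$ (as happened in the $\cD_{2k}$ and $E_7$ lemmas, where $tc(y-y')$ rather than $(y-y')$ appeared), so one must make sure that for the $\text{Sym}_3$ normal forms the relevant coordinate difference $x_i-x_i'$ still appears \emph{without} the parameter $t$ in some generator of $I_D(\tilde F)$ — equivalently, that $\theta$ has no entry involving a variable that occurs only linearly-with-parameter. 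Checking this uniformly across the classification, rather than case by isolated case, is the real work; once it is in hand the integral-closure conclusion is immediate and no test curves are needed, since one gets honest ideal membership $I_D(\theta)\subseteq I_D(\tilde F)$.
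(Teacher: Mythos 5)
Your plan is correct in the sense that it would eventually establish the honest ideal membership $I_D(\theta)\subseteq I_D(\tilde F)$, which is what the paper proves too, but you reach it by case-by-case bookkeeping over Bruce's list, whereas the paper's proof is uniform and needs no inspection of the normal spaces at all. The paper uses exactly two facts: (i) for the $\cG$-simple rank-$0$ germs into $Sym_3$ the order-$1$ entries of $F$ generate the maximal ideal and are not perturbed (beyond constants) by any $\theta$ in the normal space, so every coordinate difference $x_i-x_i'$ lies in $I_D(\tilde F)$ without a factor of $t$, i.e.\ $I_\Delta\subseteq I_D(\tilde F)$ where $I_\Delta$ is the ideal of the diagonal; and (ii) the double $h_D(z,z')=h(z)-h(z')$ of \emph{any} germ vanishes on the diagonal, hence lies in $I_\Delta$. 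Together these give $I_D(\theta)\subseteq I_\Delta\subseteq I_D(\tilde F)$ for every admissible $\theta$ at once. Your ``key observation'' about the normal space being spanned by constants plus monomials in the linearly-occurring variables is therefore unnecessary, and your entry-by-entry divisibility claim is slightly too strong as stated: a generator $(\theta_1)_D$ need not be a multiple of a single generator of $I_D(\tilde F)$ (e.g.\ $(x_1x_2)_D=x_1(x_2-x_2')+x_2'(x_1-x_1')$ needs two coordinate differences); membership in the full diagonal ideal is the right statement, and by (ii) it is automatic. What your route buys is flexibility in situations where not all variables occur linearly in $F$ --- there one genuinely must track which coordinate differences survive in $I_D(\tilde F)$ without the parameter $t$, exactly as in the $Sym_2$ lemmas --- but for the $Sym_3$ classification the single structural fact (i) collapses steps (2)--(4) of your outline, and that fact is the only thing you actually need to verify across Bruce's tables.
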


\begin{proof}
	Suppose that $F$ is of $1$-jet-type of the form in the tables in items (5) and (6) of Theorem 1.1  from \cite{B}. Since $\theta\in\frac{Mat_{(3)}(\cO_r)}{T\mathcal{G}F}$ then the $r$ order $1$ entries  of the matrix $F$ stay unperturbed, thus the differences of the monomial generators of the maximal ideal are in $I_D(\tilde{F})$. In particular the ideal $I_\Delta$ from the diagonal satisfies the inclusion $I_\Delta\subseteq I_D(\tilde{F})$. Let $\theta_i$, $i\in\{1,...,6\}$ be the components of $\theta$. Notice that every $(\theta_i)_D$ vanishes on the diagonal $\Delta$ which implies that all the generators of $I_D(\theta)$ belong to $I_\Delta$. Therefore, $I_D(\theta)\subseteq I_\Delta\subseteq I_D(\tilde{F})$ and Proposition 3.4 of \cite{SGP}  ensures the canonical vector field is Lipschitz. 

\end{proof}

\begin{remark}[\cite{Bruce}, Remark 1.2.] In the cases when $r = 2$ and $n = 2, 3$ the $\mathcal{G}$-codimension of the germs and the Milnor number of the discriminant coincide. 
\end{remark}

The next result is an application of the results of the previous section for the real case. The proof follows the same steps of Theorem 2.8 of \cite{SGP}.

\begin{theorem}
	Consider the $\mathcal{G}$-simple germs $F: \mathbb{R}^{r} \to \textrm{Hom}(\mathbb{R}^{n},\mathbb{R}^{n})$ of rank $0$ at the origin, classified in Theorem 1.1 of \cite{BruceTari}, and consider the semi-universal unfolding $\tilde{F}:\bR\times\bR^{n}\rightarrow \bR\times \mbox{Hom}(\bR^n,\bR^n)$,  where $\theta\in\frac{Mat_{n}(\mathcal{A}_{r})}{T\mathcal{G}_{e}F}$.
	
	
		If the ideal of $1$-minors of $F$ defines a reduced point then the canonical vector field is Lipschitz.
\end{theorem}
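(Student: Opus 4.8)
The plan is to reduce the statement to the abstract criterion of Theorem~\ref{T2.4} (equivalently Corollary~\ref{2.2}), exactly as in the complex prototype (Theorem~2.8 of \cite{SGP}), and to verify the required ideal inclusion $I_D(\theta)\subseteq\overline{I_D(\tilde{F})}$ by exhibiting the differences of coordinate monomials inside $I_D(\tilde F)$. First I would write $\tilde F(y,x)=(y,F(x)+y\theta(x))$, so that by part (a) of Corollary~\ref{2.2} it suffices to prove $I_D(\theta)\subseteq\overline{I_D(\tilde F)}$, and since a containment in the \emph{actual} ideal is even stronger than containment in its real integral closure, I will aim for $I_D(\theta)\subseteq I_D(\tilde F)$ directly.

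The key structural input is the hypothesis that the ideal of $1$-minors of $F$ — i.e.\ the ideal generated by all the entries $F_{ij}$ of $F$ — defines a reduced point at the origin. Since $F$ has rank $0$ at the origin, this entry ideal is $\mathfrak{m}$-primary; being reduced it must equal the maximal ideal $\mathfrak m_{r}=\langle x_1,\dots,x_r\rangle$ of $\mathcal A_r$. The next step is the observation that $\theta$ represents a class in $\frac{Mat_n(\mathcal A_r)}{T\mathcal G_e F}$, and the tangent space $T\mathcal G_e F$ (spanned by the $R_{il}$, $C_{jm}$ together with the $\partial F/\partial x_i$) contains, in each entry, multiples of the entries of $F$; hence one may choose the normal-form representative $\theta$ so that every entry of $F$ stays unperturbed to first order, i.e.\ the order-$1$ part of each entry of $\tilde F$ coincides with that of $F$. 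Consequently the linear forms $x_1,\dots,x_r$, viewed through their appearances as entries of $F$, survive as entries of $\tilde F$; applying the doubling operation, each difference $x_i-x_i'$ lies in $I_D(\tilde F)$. Because $\langle F_{ij}\rangle=\mathfrak m_r$, this shows $I_\Delta:=\langle x_1-x_1',\dots,x_r-x_r'\rangle\subseteq I_D(\tilde F)$.

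Now I would note that every entry $(\theta_k)(x)$ of $\theta$ is a function germ, so its double $(\theta_k)_D(x,x')=\theta_k(x)-\theta_k(x')$ vanishes identically on the diagonal $\Delta=\{x=x'\}$; by the (analytic, real) Nullstellensatz for the reduced ideal $I_\Delta$, every germ vanishing on $\Delta$ lies in $I_\Delta$, so all generators of $I_D(\theta)$ belong to $I_\Delta$. Chaining the inclusions gives
$$I_D(\theta)\subseteq I_\Delta\subseteq I_D(\tilde F)\subseteq\overline{I_D(\tilde F)},$$
and Corollary~\ref{2.2}(a) then yields that the canonical vector field is Lipschitz. This is precisely the real analogue of the argument in the preceding $Sym_3$ proposition, now with Theorem~\ref{T2.4} and Theorem~\ref{GR} supplying the real-analytic version of the integral-closure machinery in place of its complex counterpart.

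The main obstacle is the passage $\text{\{vanishes on }\Delta\}\Rightarrow\{\in I_\Delta\}$ in the real-analytic setting: over $\bR$ the Nullstellensatz is delicate, and vanishing on a real set need not force membership in the ideal of that set. Here, however, $I_\Delta$ is generated by the coordinate differences $x_i-x_i'$, which form a regular sequence cutting out $\Delta$ as a smooth (in fact linear) subspace, so $I_\Delta$ is already radical and the quotient $\mathcal A_{2r}/I_\Delta\cong\mathcal A_r$ is reduced; thus the implication is just the statement that a real-analytic germ vanishing on a coordinate subspace is divisible by that subspace's defining linear forms — which is elementary (Hadamard's lemma / Taylor expansion in the transverse variables). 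The only genuine subtlety left is to justify carefully that the reduced hypothesis on the $1$-minor ideal of $F$ really does force that ideal to equal $\mathfrak m_r$ and that the normal-form representative of $\theta$ can be taken with unperturbed entry-order-$1$ part; both follow from rank~$0$ at the origin together with the explicit description of $T\mathcal G_e F$ recalled in Section~1, so I expect the write-up of those points to be short.
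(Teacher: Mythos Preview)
Your proposal is correct and follows essentially the same route as the paper's proof: both argue that the reduced $1$-minor hypothesis forces the order-$1$ entries of $F$ to stay unperturbed, whence $I_\Delta\subseteq I_D(\tilde F)$, and then use that the doubles $(\theta_{ij})_D$ vanish on $\Delta$ to obtain $I_D(\theta)\subseteq I_\Delta\subseteq I_D(\tilde F)$, concluding via Corollary~\ref{2.2}. Your write-up is simply more explicit than the paper's about why vanishing on $\Delta$ implies membership in $I_\Delta$ in the real-analytic setting and about why the $1$-minor ideal equals $\mathfrak m_r$.
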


\begin{proof}
	Since the ideal of $1$-minors of $F$ defines a reduced point and $\theta\in\frac{Mat_{n}(\mathcal{A}_{r})}{T\mathcal{G}_{e}F}$ then the $r$ order $1$ entries of $F$ stay unperturbed, thus the differences of the monomial generators of the maximal ideal are in $I_D(\tilde{F})$. Consequentely, $I_{\Delta}\subseteq I_D(\tilde{F})$. Let $\theta_{ij}$ be the components of $\theta$, $i,j\in\{1,\hdots,n\}$. Clearly all $(\theta_{ij})_D$ vanish on $\Delta$. Hence, $I_D(\theta)\subseteq I_{\Delta}$ and the proof is done by Corollary \ref{2.2}.
\end{proof}

\begin{remark}

In \cite{MP} the author obtain sufficient conditions for topological triviality of $1$-parameter deformations of weighted homogeneous matrix $M$ (see Proposition 6.1 and Proposition 6.2 ). Considering the action defined in the Definition \ref{action}, the triviality condition is related to the tangent space to the $\mathcal{G}$-orbit  of $M$. These condition ensure that the canonical vector field is integrable. 

At this point, one way to continue our study is to show that the homeomorphism obtained by integration of the canonical Lipschitz vector fields gives the bi-Lipschitz equivalence of the members of the respective family of square matrix map-germs according to Definition \ref{action}.


\end{remark}

\newpage

\begin{small}
	
{\sc Thiago Filipe da Silva
	
	Departamento de Matem\'atica, Universidade Federal do Esp\'irito Santo \\
	Av. Fernando Ferrari, 514 - Goiabeiras, 29075-910 - Vit\'oria - ES, Brazil, thiago.silva@ufes.br}

\vspace{1cm}
	
{\sc Nivaldo de G\'oes Grulha J\'unior 
	
	Instituto de Ci\^encias Matem\'aticas e de Computa\'c\~ao - USP\\
	Av. Trabalhador S\~ao Carlense, 400 - Centro, 13566-590 - S\~ao Carlos - SP, Brazil, njunior@icmc.usp.br}

\vspace{1cm}
	
{\sc Miriam da Silva Pereira 
	
	Departamento de Matem\'atica, Universidade Federal da Para\'iba, 58.051-900  Jo\~ao Pessoa, Brazil, miriam@mat.ufpb.br}

\end{small}

\end{document}